\providecommand{\U}[1]{\protect\rule{.1in}{.1in}}
\newtheorem{theorem}{Theorem}
\newtheorem{corollary}[theorem]{Corollary}
\newtheorem{lemma}[theorem]{Lemma}
\newtheorem{proposition}[theorem]{Proposition}
\newtheorem{remark}[theorem]{Remark}
\newenvironment{proof}[1][Proof]{\noindent\textbf{#1.} }{\ \rule{0.5em}{0.5em}}
\begin{document}

\title{ Complete minimal discs in Hadamard manifolds}
\author{Jaime Ripoll\\UFRGS\\Instituto de Matem\'{a}tica \\Av. Bento Gon\c{c}alves 9500\\91540-000 Porto Alegre-RS\\Brazil\\jaime.ripoll@ufrgs.br
\and Friedrich Tomi\\Universit\"{a}t Heidelberg\\Mathematisches Institut\\Im Neuhenheimer Feld 288\\69120 Heidelberg\\Germany\\tomi@mathi.uni-heidelberg.de}
\date{}
\maketitle

\begin{abstract}
Using the classical approach we show the existence of disc type solutions to
the asymptotic Plateau problem in certain Hadamard manifolds which may have
arbitrarily strong curvature and volume growth.

\end{abstract}

\section{Introduction}

\qquad The study of complete minimal submanifolds of negatively curved
Riemannian manifolds was initiated by Anderson when he showed that each closed
submanifold of the sphere at infinity of the hyperbolic $n$-space is the
boundary (relative to the geometric compactification of the hyperbolic space)
of an area minimizing variety \cite{An},\ \cite{An2}. In the sequel a
considerable number of related and more general results has appeared in the
literature (see the survey by Baris Coskunuzer \cite{C1} which give an account
of the state of art till 2009). We mention in particular the work of Victor
Bangert and Urs Lang \cite{BL} where they prove existence results for complete
minimizing varieties in manifolds the metric of which is bi-Lipschitz
equivalent with a Riemannian metric of sectional curvature bounded between two
negative constants. As far as we can see all previous authors use the
Geometric Measure Theory approach or they work in the graph setting (see
\cite{DLR}, \cite{GS}, \cite{NS}, \cite{L}, \cite{RT2} for works using the
graph approach).

In this paper we use the methods of the classical Plateau problem to prove the
existence of minimal discs in certain $n$-dimensional Hadamard manifolds with
a prescribed Jordan curve at infinity as their boundary. The technique of
representing surfaces by mappings which we apply here allows the control of
the topological type, however at the cost of a restriction to two dimensional
surfaces. A different approach was undertaken by B. Coskunuzer in the case of
the three dimensional hyperbolic space, even for surfaces of constant mean
curvature \cite{C2}.

The metrics which we consider in this paper do in general not fall into the
class of metrics in the paper of Bangert and Lang mentioned above since they
need not satisfy a growth condition for the volume of geodesic balls as it
were the case for metrics which are bi-Lipschitz equivalent to a metric of
sectional curvature bounded below by a constant. On the other hand, our
metrics are restricted in a different respect: In a sense made precise below
(see Theorem \ref{main1}) they must be comparable with a rotational metric, in
particular they are bi-Lipschitz equivalent with such a metric.

A further important feature of our result lies in the fact that we need not
require the existence of convex barriers at infinity which seem to be of
fundamental importance in the previous papers. To that regard, we mention the
recent work of the first author of this paper together with Jean-baptiste
Casteras and Ilkka Holopainen \cite{CHR}. We are able to dispense with the
convexity at infinity since we introduce coordinates for our manifold in which
the mappings approximating the solution surface have bounded norm in the
Sobolev space $H^{1,2}$. Thus we may use the concept of boundary values of
such functions and interpret the boundary condition for the limit surface in
the sense of Sobolev spaces. In the course of this approximation process we
are confronted with a possible energy concentration phenomenon at the boundary
of the surfaces. In the absence of convex barriers at infinity we exclude this
possibility through a blow up argument. The concept of convexity at infinity
which may or may not hold in our case is discussed in greater detail below. A
precise description of our results follows now.

Let $N^{n},$ $n\geq3,$ be a Hadamard manifold, that is, $N$ is a connected,
simply connected, complete, $n-$dimensional Riemannian manifold such that
$K_{N}\leq0,$ where $K_{N}$ is the supremum of the sectional curvatures of $N$
at any plane of the tangent space at any point of $N$. For the sake of
simplicity, we may assume that $N^{n}$ is $C^{\infty}$ smooth. Recall that the
asymptotic boundary $\partial_{\infty}N$ of $N$ is defined as the set of all
equivalence classes of unit speed geodesic rays in $N$; two such rays
$\gamma_{1},\gamma_{2}:\left[  0,\infty\right)  \rightarrow M$ are equivalent
if $\sup_{t\geq0}d\left(  \gamma_{1}(t),\gamma_{2}(t)\right)  <\infty$, where
$d$ is the Riemannian distance in $N.$ The so called \emph{geometric}
compactification $\overline{N}$ of $N$ is then given by $\overline{N}%
:=N\cup\partial_{\infty}N,$ endowed with the cone topology. It is well known
that $\overline{N}$ is homeomorphic to the closed unit ball of $\mathbb{R}%
^{n}$ (see \cite{EO} or \cite{SY}, Ch. 2). For any subset $S\subset N$, we
define $\partial_{\infty}S=\partial_{\infty}N\cap\overline{S}.$

Setting $r(x)=d(x,o),$ $x\in N,$ where $o$ is a fixed point in $N$, $d$ the
Riemannian distance and%

\begin{equation}%
\begin{array}
[c]{c}%
k^{+}(s)=\sup\left\{  K\left(  \operatorname{grad}r(x),Y\right)  \text{
$\vert$
}r(x)=s,\text{ }Y\in T_{x}N\right\} \\
k^{-}(s)=\inf\left\{  K\left(  \operatorname{grad}r(x),Y\right)  \text{
$\vert$
}r(x)=s,\text{ }Y\in T_{x}N\right\}
\end{array}
\label{kk}%
\end{equation}
we prove:

\begin{theorem}
\label{main1}Assume that there is a continuous non-increasing negative
function $k_{0}$ defined on the interval $\left[  0,+\infty\right)  $ such that

(1)%
\[
k^{+}(x)\leq k_{0}(s)<0,\text{ }0\leq s<+\infty
\]

(2)
\[
\frac{k^{-}-k_{0}}{\sqrt{-k_{0}}}\in L^{1}\left(  \left[  0,+\infty\right)
\right)  .
\]

Then there is a Riemannian metric $\left\langle \text{ , }\right\rangle _{B}$
in the unit ball $B\subset\mathbb{R}^{n}$ such that $\left(  B,\left\langle
\text{ , }\right\rangle _{B}\right)  $ is isometric to $N$ and the asymptotic
boundary $\partial_{\infty}B$ of $B$ is identified with the topological
boundary $\partial B$ of $B.$ In this model $\left(  B,\left\langle \text{ ,
}\right\rangle _{B}\right)  $ of $N$, given a Euclidean rectifiable curve
$\Gamma\subset\partial_{\infty}B,$ there is a proper, minimal (possibly
branched) immersion $u:D\rightarrow B,$ where $D$ is the unit disc in
$\mathbb{R}^{2},$ such that $u$ belongs to the Sobolev space $H_{2}%
^{1}(D,\mathbb{R}^{n})$ and the trace of $u|\partial D$ parametrizes $\Gamma$
monotonically. In the case $n=3$ the map $u$ is an embedding of $D.$
\end{theorem}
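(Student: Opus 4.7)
The plan is to follow the classical Douglas--Rado method, adapted to the asymptotic Plateau problem. I would proceed in four stages: construction of the ball model, solution of a sequence of approximating interior Plateau problems, passage to the limit via $H^{1,2}$-compactness together with boundary control, and embeddedness in dimension three.

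First, I would build $(B,\langle\,,\,\rangle_B)$ by taking normal geodesic coordinates $(r,\theta)\in[0,\infty)\times S^{n-1}$ at a basepoint $o\in N$ and reparametrizing the radial coordinate by a diffeomorphism $\rho=\Phi(r)$ of $[0,\infty)$ onto $[0,1)$ chosen so that the cone topology on $\partial_\infty N$ coincides with the Euclidean topology on $\partial B$. Rauch's Jacobi field comparison with the rotationally symmetric model of radial curvature $k_0$, together with the $L^1$ assumption (2), yields bi-Lipschitz control between $\langle\,,\,\rangle_B$ and this reference metric, and in particular provides the uniform comparison between the intrinsic and Euclidean geometries on $B$ that will be needed below.

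Second, I would approximate $\Gamma$ by a sequence of rectifiable Jordan curves $\Gamma_j$ lying in the interior of $B$, converging to $\Gamma$ in Hausdorff distance, with uniformly bounded Euclidean lengths. For each $\Gamma_j$, Morrey's solution of the Plateau problem in a Hadamard manifold produces a conformal harmonic (hence minimal, possibly branched) disc $u_j:D\to B$ whose boundary trace parametrizes $\Gamma_j$ monotonically. Comparison with a cone-type competitor over $\Gamma_j$ gives a uniform area bound, and via the bi-Lipschitz model this translates into a uniform bound on $\|u_j\|_{H_2^1(D,\mathbb{R}^n)}$ measured in the Euclidean structure of $B$.

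Third, weak compactness extracts a subsequential limit $u_j\rightharpoonup u$ in $H_2^1(D,\mathbb{R}^n)$; weak conformality and weak harmonicity pass to the limit, and interior bubbling is ruled out because there are no non-constant harmonic $2$-spheres into a Hadamard target. The crucial step, and the main obstacle, is to exclude energy concentration at boundary points $p\in\partial D$, and this is exactly where the absence of convex barriers at infinity has to be compensated. Here I would perform a blow-up, rescaling $u_j$ around a concentration point at scales matched to the concentrated energy; the limit should be a non-constant conformal harmonic map from the upper half plane whose Sobolev trace on $\mathbb{R}$ degenerates to a single Euclidean point of $\Gamma$, and then a Schwarz-type reflection followed by the removable singularity theorem produces a non-constant harmonic $2$-sphere into the Hadamard target, a contradiction. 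Having excluded concentration, strong convergence up to the boundary identifies the trace of $u|_{\partial D}$ with a monotone parametrization of $\Gamma$, and properness of $u$ follows since those boundary values lie on $\partial_\infty B$. Finally, when $n=3$, the Meeks--Yau embedding theorem, valid in Riemannian $3$-manifolds of nonpositive sectional curvature, applies to each $u_j$ and is preserved under Hausdorff convergence, giving embeddedness of $u$.
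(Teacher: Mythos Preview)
Your outline tracks the paper's strategy in its first two stages and in the $n=3$ embeddedness step, but there is a genuine gap in the third stage, and a smaller one in the second.

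\textbf{The uniform $H^{1}$ bound.} The cone competitor only gives a Riemannian area bound $\operatorname{area}_N(u_j)\le C\,G_0(R_j)$, which blows up as $R_j\to\infty$. The metric $\langle\,,\,\rangle_B$ is \emph{not} bi-Lipschitz to the Euclidean metric on $B$; rather $\langle\,,\,\rangle_B=f'(|x|)^2\langle\,,\,\rangle_b$ with $f'\to\infty$ at $\partial B$, and only $\langle\,,\,\rangle_b$ is uniformly comparable to the Euclidean metric. To obtain a uniform \emph{Euclidean} energy bound the paper needs, in addition to the cone bound, the monotonicity formula $\operatorname{area}_N(u_j\cap B_s(o))\le C\,G_0(s)$ for every $s$, and then an integration by parts in $r$ against the conformal factor $f'(r)^{-2}$ (Lemma~\ref{areas}). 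Your sentence ``via the bi-Lipschitz model this translates into a uniform bound'' hides this nontrivial step.

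\textbf{The boundary blow-up.} This is the essential gap. Your proposed contradiction does not go through: the blow-up limit you describe is a proper conformal harmonic map $v:\mathbb{H}\to N$ whose Sobolev trace on $\mathbb{R}$ is a single point $p\in\partial_\infty N$. But $p$ lies at infinite Riemannian distance, so there is no Schwarz reflection available (no isometric involution of $N$ fixing a totally geodesic hypersurface through $p$), and the removable singularity theorem for harmonic maps requires the map to stay in a compact set near the puncture, which fails here since $v$ is proper. In fact such ``punctured minimal spheres'' --- proper minimal discs in $N$ with a single asymptotic boundary point --- are \emph{not} excluded by the absence of harmonic $2$-spheres in Hadamard targets; they can and do arise as limits. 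So no contradiction is available.

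The paper's mechanism is different: it does not exclude the split-off, it \emph{quantizes} it. When the boundary parametrization degenerates, one conformally reparametrizes the concentrating lune $D\cap D_{r_k}(-1)$ back onto $D$, obtaining new maps $\tilde u_{R_k}$ that again satisfy the normalization $\tilde u_{R_k}(0)\in B_{\tilde\rho}(o)$ (this uses the monotonicity formula once more, applied to the lune). The discarded piece $u_{R_k}|_{D\setminus D_{r_k}(-1)}$ passes through $B_\rho(o)$ and has boundary outside $B_{2\rho}(o)$, so monotonicity forces it to carry at least a fixed Euclidean energy $e(\rho)>0$; hence $E(\tilde u_{R_k})\le E_0-e(\rho)$. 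One then reruns the whole convergence analysis on $(\tilde u_{R_k})$. Either the limit solves the asymptotic Plateau problem, or another punctured sphere splits off and the energy drops by another $e(\rho)$. Since the total Euclidean energy is bounded by $E_0$ and there is a fixed positive lower threshold $a_0$ for the area of any disc spanning a mildly modified $\Gamma_R$, this iteration terminates after finitely many steps with a genuine solution. You should replace the reflection argument by this energy-drop iteration.
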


\begin{remark}
The function $k_{0}$ above is to be thought of as the curvature of a
rotational background metric. Our interest is in the case that $k_{0}(s)$
converges to $-\infty$ when $s\rightarrow+\infty,$ so that (2) becomes the
lesser restrictive the faster $k_{0}$ converges to $-\infty.$
\end{remark}

It follows from the properties of the Sobolev trace that $\partial_{\infty
}u(D)\supset\Gamma\ $but we do not know if $\partial_{\infty}u(D)=\Gamma$
under the hypothesis of Theorem \ref{main1}. However we may conclude this
equality if one requires additionally that $N$ has some global convexity
property. This is the case if $N$ satisfies the \emph{strict convexity
condition}, as defined in \cite{RT}, namely: Given $x\in\partial_{\infty}N$
and a relatively open subset $W\subset\partial_{\infty}N$ containing $x,$
there exists a $C^{2}$-open subset $\Omega\subset\overline{N}$ such that
$x\in\operatorname*{Int}\left(  \partial_{\infty}\Omega\right)  \subset W,$
where $\operatorname*{Int}\left(  \partial_{\infty}\Omega\right)  $ denotes
the interior of $\partial_{\infty}\Omega$ in $\partial_{\infty}N,$ and
$N\setminus\Omega$ is convex.

We remark that under the assumption $K_{N}\leq-a^{2}<0,$ the strict convexity
condition is equivalent to the \emph{convex conic neighborhood condition }as
defined by H. Choi in \cite{Ch}. It is proved in \cite{RT} that if $K_{N}%
\leq-a^{2}$ then $N$ satisfies the strict convexity condition either if the
metric of $N$ is rotationally symmetric or if the sectional curvature of $N$
decays at most exponentially (Theorems 13 and 14 of \cite{RT}).

\begin{theorem}
\label{main2}Under the same hypothesis of Theorem \ref{main1} if,
additionally, $N$ satisfies the strict convexity condition then, besides the
conclusions of Theorem \ref{main1} it holds $\partial_{\infty}u(D)=\Gamma.$
This holds, in particular, if the metric of $M$ is rotationally symmetric or
if the sectional curvature of $N$ decays at most exponentially.
\end{theorem}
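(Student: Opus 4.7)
The inclusion $\Gamma \subset \partial_{\infty}u(D)$ is already furnished by Theorem~\ref{main1}, so the only task is to establish the reverse inclusion $\partial_{\infty}u(D) \subset \Gamma$. The plan is a barrier argument exploiting the convex complements at infinity provided by the strict convexity condition. Fix an arbitrary $y \in \partial_{\infty}N \setminus \Gamma$; since $\Gamma$ is closed in the Hausdorff space $\partial_{\infty}N$, we may choose a relatively open neighbourhood $W$ of $y$ with $\overline{W} \cap \Gamma = \emptyset$. Applying strict convexity to $y$ and $W$ produces a $C^{2}$-open set $\Omega \subset \overline{N}$ with $y \in \mathrm{Int}(\partial_{\infty}\Omega) \subset W$ and $C := N \setminus \Omega$ convex. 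In particular $\Gamma$ is disjoint from $\mathrm{Int}(\partial_{\infty}\Omega)$, and the rectifiable curve $\Gamma$ lies in the closure of $C$ inside $\overline{N}$.

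The goal is then to show $u(D) \cap \Omega = \emptyset$, which together with $y \in \mathrm{Int}(\partial_{\infty}\Omega)$ and the structure of the cone topology forces $y \notin \partial_{\infty}u(D)$. The natural tool is the distance function $\rho = d(\cdot, C) \colon N \to [0,\infty)$: since $C$ is a closed convex subset of the Hadamard manifold $N$, the function $\rho$ is convex along geodesics, and hence $\rho \circ u$ is a nonnegative weakly subharmonic function on $D$ for any conformal minimal $u$. The delicate point, which I expect to be the principal obstacle, is that the Sobolev trace of $u$ on $\partial D$ takes values in $\partial_{\infty}N$ rather than in $N$, so $\rho \circ u$ has no direct classical boundary interpretation. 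I would handle this by returning to the approximation used to construct $u$ in Theorem~\ref{main1}: $u$ arises as the limit of a sequence of classical solutions $u_{k} \colon D \to N$ to Plateau-type problems whose boundary curves $\Gamma_{k} \subset N$ converge in $\overline{N}$ to $\Gamma$. Because $\overline{W} \cap \Gamma = \emptyset$ and $\mathrm{Int}(\partial_{\infty}\Omega) \subset W$, for $k$ sufficiently large each $\Gamma_{k}$ is separated from $\overline{\Omega}$; the classical maximum principle applied to the subharmonic function $\rho \circ u_{k}$ on $D$ then gives $u_{k}(D) \subset C$, and passing to the limit yields $u(D) \subset \overline{C}$, so $u(D) \cap \Omega = \emptyset$.

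Consequently $y$ is not an accumulation point of $u(D)$ in the cone topology of $\overline{N}$, hence $y \notin \partial_{\infty}u(D)$. Since $y$ was an arbitrary point of $\partial_{\infty}N \setminus \Gamma$, this proves $\partial_{\infty}u(D) \subset \Gamma$ and completes the main assertion. The two explicit sufficient conditions listed at the end of the theorem---rotationally symmetric metric, or sectional curvature decaying at most exponentially---then follow at once from Theorems~13 and~14 of \cite{RT}, which assert precisely that a Hadamard manifold with $K_{N} \le -a^{2} < 0$ satisfies the strict convexity condition in those two settings. The technical care required is essentially concentrated in the uniform barrier step, namely verifying that the approximating boundary curves $\Gamma_{k}$ are eventually pushed out of $\overline{\Omega}$; this rests on the shrinkability of the neighbourhood $W$ together with the definition of the cone topology on $\overline{N}$ and the mode of convergence $\Gamma_{k} \to \Gamma$ built into the proof of Theorem~\ref{main1}.
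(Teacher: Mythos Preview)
Your proposal is correct and follows essentially the same barrier argument as the paper's proof: both pick a point $y\notin\Gamma$, invoke the strict convexity condition to obtain $\Omega$ with convex complement, observe that the boundaries $\Gamma_{R}$ of the approximating Morrey discs $u_{R}$ eventually miss $\Omega$, apply a maximum principle to conclude $u_{R}(D)\cap\Omega=\emptyset$, and pass to the limit to get $y\notin\partial_{\infty}u(D)$. The only cosmetic difference is that the paper phrases the barrier step via the equidistant foliation of $\Omega$ (using $K_{N}\le -a^{2}$ and Hessian comparison to make the leaves convex toward the complement), whereas you use subharmonicity of the distance to the convex set $N\setminus\Omega$; these are equivalent implementations of the same idea.
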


\section{Differential geometric preliminaries}

\begin{lemma}
\label{Fsao}Let $k\in C^{0}\left(  \left[  0,\infty\right)  \right)  ,$
$k\leq0,$ and let $F$ be the solution of the initial value problem%
\begin{equation}
F^{\prime\prime}+kF=0,\text{ }F(0)=0,\text{ }F^{\prime}(0)=1. \label{F}%
\end{equation}
Then we have:

(i) For all $s>0$ it holds
\begin{equation}
\frac{sF^{\prime}(s)}{F(s)}\geq1 \label{efe}%
\end{equation}
and, if for some constant $a>0$ the inequality $k(s)<-a^{2}$ holds for all $s$
then it follows that%
\[
\frac{F^{\prime}}{F}\geq a.
\]

(ii) If $k$ is non-increasing then $G/(sF)$ is non-increasing, too, where
$G(s)=\int_{0}^{s}F(t)dt.$

(iii) Let $F_{0}$ be a further solution of (\ref{F}) with $k_{0}$ replacing
$k,$ $k_{0}<0.$ We assume that $k_{0}$ is non-increasing and that the function%
\[
\varphi(s):=\frac{\left\vert k(s)-k_{0}(s)\right\vert }{\sqrt{-k_{0}}}%
\]
is integrable on $\left[  0,+\infty\right)  $. Then one has the estimate%
\[
e^{-C}\leq\frac{F(s)}{F_{0}(s)}\leq e^{C},\text{ }0\leq s<+\infty
\]
with
\[
C=\frac{\pi}{2}\int_{0}^{+\infty}\varphi(t)dt.
\]

\end{lemma}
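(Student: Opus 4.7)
My plan handles each of the three parts separately.

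For (i), I would introduce $h(s) := sF'(s) - F(s)$, note $h(0) = 0$, and compute $h'(s) = sF''(s) = -skF \geq 0$ (since $F \geq 0$ follows from $F'' \geq 0$ and the initial conditions). So $h \geq 0$, which is $sF'/F \geq 1$. For the sharper bound assuming $k < -a^2$, I would study the Riccati variable $u = F'/F$, satisfying $u' = -k - u^2$. Near the origin $u(s) \sim 1/s \to \infty$ since $F \sim s$, so $u > a$ for small $s$; at any downward crossing $u(s_0) = a$ one has $u'(s_0) = -k(s_0) - a^2 > 0$, blocking the crossing.

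For (ii), I would reduce to showing $q(s) := sF'(s)/F(s)$ is non-decreasing. From $F^2 q = sFF'$ and $F'' = -kF$, a direct computation gives $F^2 q' = FF' - s(F')^2 - skF^2$, and differentiating again yields the clean identity
\[
(F^2 q')' = -F^2\bigl(2k + sk'\bigr).
\]
The hypotheses $k \leq 0$ and $k' \leq 0$ make the right side non-negative; together with $F^2 q'(0) = 0$ this gives $q' \geq 0$ (approximating $k$ if it is only continuous non-increasing). Now setting $\Phi(s) := G(s)(1+q(s)) - sF(s)$, we have $\Phi(0) = 0$ and $\Phi'(s) = G(s) q'(s) \geq 0$, so $\Phi \geq 0$. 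The inequality $G(1+q) \geq sF$ rearranges to $sF^2 \leq G(F+sF')$, which is exactly the sign condition $[G/(sF)]' \leq 0$.

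For (iii), I would employ a Riccati comparison. With $v = F'/F$, $v_0 = F_0'/F_0$ and $z := v - v_0$, subtracting Riccati equations yields $z' + (v+v_0)z = k_0 - k$. Multiplication by $FF_0$ (noting $(FF_0)' = (v+v_0)FF_0$) gives $(FF_0\, z)' = (k_0-k)FF_0$, which integrates (the boundary term at $0$ vanishes since $z = O(s)$ and $FF_0 = O(s^2)$) to
\[
F(s)F_0(s)\, z(s) = \int_0^s (k_0-k)\, FF_0\, dt.
\]
Since $(\log(F/F_0))' = z$ and $F/F_0 \to 1$ at zero, a further integration and Fubini produce
\[
\bigl|\log(F/F_0)(s)\bigr| \leq \int_0^s |k-k_0|(\tau)\, F(\tau)F_0(\tau) \int_\tau^s \frac{dt}{F(t)F_0(t)}\, d\tau.
\]
The technical heart, and the main obstacle, is the sharp bound $F(\tau)F_0(\tau)\int_\tau^s dt/(FF_0) \leq (\pi/2)/\sqrt{-k_0(\tau)}$: using $F(t) \geq F(\tau)$ (from (i)) this reduces to estimating $F_0(\tau)\int_\tau^s dt/F_0$, which I would handle via a Prüfer-type substitution such as $\tan\theta(t) = F_0(t)\sqrt{-k_0(t)}/F_0'(t)$ with $\theta(0) = 0$, using the monotonicity of $k_0$ to bound the total phase change over $[0,\infty)$ by a quarter period (supplying the factor $\pi/2$). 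A naive Gronwall argument based on the putative inequality $F_0'/F_0 \geq \sqrt{-k_0}$ does not work, since that inequality may fail transiently when $-k_0$ grows rapidly.
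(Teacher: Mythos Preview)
Your arguments for (i) and (ii) are correct. In (i) your computation $h'=sF''=-skF\ge 0$ is marginally slicker than the paper's, which instead works with $g=F'/F$ and the inequality $(1/g)'\le 1$; the Riccati crossing argument for $F'/F\ge a$ is essentially the same as the paper's. In (ii) you take a genuinely different route: the paper never passes through the monotonicity of $q=sF'/F$ but instead estimates $(F'G-FG')'=-F(G''+kG)$ directly, using that integration of $F''+kF=0$ together with $k$ non-increasing gives $G''+kG\le 1$, hence $F'G-FG'\ge -G'$ and $(\ln(sF/G))'\ge 1/s-1/F\ge 0$. Your approach via the identity $(F^2q')'=-F^2(2k+sk')$ is elegant and yields the stronger intermediate fact that $q$ is non-decreasing; the price is the smoothing step for merely continuous $k$, which the paper avoids.

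In (iii) your reduction is exactly the paper's: from $(FF_0 z)'=(k_0-k)FF_0$ and $(\log(F/F_0))'=z$ one arrives, after using $F(t)\ge F(\tau)$, at the need for
\[
F_0(\tau)\int_\tau^s\frac{dt}{F_0(t)}\ \le\ \frac{\pi}{2}\,\frac{1}{\sqrt{-k_0(\tau)}}.
\]
However, your proposed Pr\"ufer substitution is the weak point. The equation $F_0''+k_0F_0=0$ with $k_0<0$ is non-oscillatory, so speaking of a ``quarter period'' of phase is at best heuristic; with your choice $\tan\theta=\sqrt{-k_0}\,F_0/F_0'$ one has $\theta(0)=0$ and, already in the constant-curvature case $k_0\equiv -a^2$, $\tan\theta(t)=\tanh(at)\to 1$, so $\theta\to\pi/4$, not $\pi/2$, and it is unclear how the integral $\int dt/F_0$ is read off from $\theta$. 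As stated, this step is a gap. The paper closes it with a one-line Sturm comparison: freeze the coefficient at $\tau$ and compare $F_0$ on $[\tau,\infty)$ with the solution $f$ of $f''+k_0(\tau)f=0$ having the same Cauchy data at $\tau$. Since $k_0$ is non-increasing this gives
\[
F_0(t)\ \ge\ F_0(\tau)\cosh\!\bigl(\sqrt{-k_0(\tau)}\,(t-\tau)\bigr),\qquad t\ge\tau,
\]
and then $\int_\tau^\infty dt/\cosh(\sqrt{-k_0(\tau)}(t-\tau))=(\pi/2)/\sqrt{-k_0(\tau)}$ yields the desired bound directly. Replacing your Pr\"ufer sketch by this comparison makes your argument complete and identical in substance to the paper's.
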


\begin{proof}
(i) Using (\ref{F}) one sees that $g:=F^{\prime}/F$ solves the initial value
problem%
\begin{equation}
g^{\prime}=-k-g^{2},\text{ }g(0)=+\infty. \label{ge}%
\end{equation}

Since $k\leq0$ we have $\left(  1/g\right)  ^{\prime}\leq1$ and hence
$g(s)\geq1/s,$ proving (\ref{efe}). If moreover $k(s)\leq-a^{2}$ it follows
that%
\begin{equation}
g^{\prime}\geq a^{2}-g. \label{gln}%
\end{equation}
We clearly have $g(s)>a$ on some interval $\left[  0,s_{0}\right]  ,$
$s_{0}>0.$ We consider the maximal interval $\left[  s_{0},R\right)  $ on
which $g(s)>a.$ On this interval we get from (\ref{gln})%
\[
\frac{1}{2a}\left(  \ln\frac{g-a}{g+a}\right)  ^{\prime}=\frac{g^{\prime}%
}{g^{2}-a^{2}}\geq-1,
\]
what upon integration yields%
\[
\frac{g(s)-a}{g(s)+a}\geq\frac{g(s_{0})-a}{g(s_{0})+a}e^{-2a(s-s_{0}%
)}>0,\text{ }s_{0}\leq s<R.
\]
This shows that $R=+\infty,$ i.e. $g(s)>a$ for $s>0.$

(ii) The statement is equivalent with
\[
\left(  \ln\frac{sF}{G}\right)  ^{\prime}\geq0
\]
that is%
\[
\left(  \ln\frac{sF}{G}\right)  ^{\prime}=\frac{1}{s}+\frac{F^{\prime}}%
{F}-\frac{G^{\prime}}{G}=\frac{1}{s}+\frac{F^{\prime}G-FG^{\prime}}{FG}.
\]
We compute%
\[
\left(  F^{\prime}G-FG^{\prime}\right)  ^{\prime}=F^{\prime\prime}%
G-FG^{\prime\prime}=-kFG-FG^{\prime\prime}=-F\left(  G^{\prime\prime
}+kG\right)  .
\]
Integrating (\ref{F}) and using that $k(s)$ is non-increasing, we obtain%
\begin{align*}
0  &  =F^{\prime}(s)-1+\int_{0}^{s}k(t)F(t)dt\\
&  \geq F^{\prime}(s)-1+k(s)\int_{0}^{s}F(t)dt=G^{\prime\prime}(s)+k(s)G(s)-1.
\end{align*}
The last inequality yields%
\[
\left(  F^{\prime}G-FG^{\prime}\right)  ^{\prime}\geq-F
\]
and, upon integration and observing $G(0)=0,$ $G^{\prime}(0)=F(0)=0,$%
\[
F^{\prime}G-FG^{\prime}\geq-G^{\prime}.
\]
This leads to
\[
\left(  \ln\frac{sF}{G}\right)  ^{\prime}\geq\frac{1}{s}-\frac{1}{F},
\]
proving (ii) since $F(s)\geq s$ for all $s$ because of $F^{\prime}(0)=1$ and
$F^{\prime\prime}\geq0.$

(iii) One computes
\[
\left\vert \left(  F^{\prime}F_{0}-FF_{0}^{\prime}\right)  ^{\prime
}\right\vert =\left\vert F^{\prime\prime}F_{0}-FF_{0}^{\prime\prime
}\right\vert =\left\vert k-k_{0}\right\vert FF_{0}%
\]
leading to%
\begin{align}
\left\vert \frac{F^{\prime}(s)F_{0}(s)-F(s)F_{0}^{\prime}(s)}{F(s)F_{0}%
(s)}\right\vert  &  \leq\int_{0}^{s}\frac{\left\vert k(t)-k_{0}(t)\right\vert
}{F(s)F_{0}(s)}F(t)F_{0}(t)dt\label{mod}\\
&  \leq\int_{0}^{s}\left\vert k(t)-k_{0}(t)\right\vert \frac{F_{0}(t)}%
{F_{0}(s)}dt,\nonumber
\end{align}
since $F$ is non-decreasing. For a fixed $t$ let $f$ be the solution of the
initial value problem
\[
f^{\prime\prime}(s)+k_{0}(t)f(s)=0,\text{ }f(t)=F_{0}(t),\text{ }f^{\prime
}(t)=F_{0}^{\prime}(t).
\]
Since $k_{0}$ is non-increasing by assumption we conclude that for $s\geq t$%
\begin{align*}
F_{0}(s)  &  \geq f(s)=F_{0}(t)\cosh\left(  \sqrt{-k_{0}(t)}\left(
s-t\right)  \right) \\
&  +\frac{F_{0}^{\prime}(t)}{\sqrt{-k_{0}(t)}}\sinh\left(  \sqrt{-k_{0}%
(t)}\left(  s-t\right)  \right) \\
&  \geq F_{0}(t)\cosh\left(  \sqrt{-k_{0}(t)}\left(  s-t\right)  \right)  .
\end{align*}
Thus we get with $0<\delta<r$%
\begin{align*}
\left\vert \ln\frac{F(r)}{F_{0}(r)}-\ln\frac{F(\delta)}{F_{0}(\delta
)}\right\vert  &  =\left\vert \int_{\delta}^{r}\left(  \frac{F^{\prime}%
(s)}{F(s)}-\frac{F_{0}^{\prime}(s)}{F_{0}(s)}\right)  ds\right\vert \\
&  \leq\int_{\delta}^{r}\int_{0}^{s}\frac{\left\vert k(t)-k_{0}(t)\right\vert
}{\cosh\left(  \sqrt{-k_{0}(t)}\left(  s-t\right)  \right)  }dtds\\
&  \leq\int_{0}^{\infty}\left\vert k(t)-k_{0}(t)\right\vert \int_{t}^{\infty
}\frac{ds}{\cosh\left(  \sqrt{-k_{0}(t)}\left(  s-t\right)  \right)  }dt\\
&  =\int_{0}^{\infty}\left\vert k(t)-k_{0}(t)\right\vert \left[  \frac
{\arctan\left(  \sinh\sqrt{-k_{0}(t)}\left(  s-t\right)  \right)  }%
{\sqrt{-k_{0}(t)}}\right]  _{t}^{\infty}dt\\
&  =\frac{\pi}{2}\int_{0}^{\infty}\frac{\left\vert k(t)-k_{0}(t)\right\vert
}{\sqrt{-k_{0}(t)}}dt=:C<+\infty.
\end{align*}

Since%
\[
\frac{F(\delta)}{F_{0}(\delta)}\rightarrow1\text{ as }\delta\rightarrow0
\]
the statement follows.
\end{proof}

\bigskip

In the following we consider a $n-$dimensional Hadamard manifold $N,$ whose
Riemannian metric is denoted by $\left\langle \text{ },\text{ }\right\rangle
.$ We fix a point $o\in N$ and consider the distance function $r(x)=d(x,o)$ to
$o.$ We investigate some geometric properties of $N$ by means of the functions
$k^{+},$ $k^{-}$ defined by (\ref{kk}).

\begin{lemma}
\label{hes1}Let $k\leq0$ be a continuous function such that $k^{+}(s)\leq
k(s)$ for $s\in\left[  0,+\infty\right)  .$ Let $F$ be the function defined as
in Lemma \ref{Fsao} with the actual $k.$ Then the inequality%
\[
\operatorname*{Hess}r\left(  x\right)  \left(  u,u\right)  \geq\frac
{F^{\prime}(r(x))}{F(r(x))}\left\langle u,u\right\rangle
\]
holds for all $x\in N\backslash\{o\}$ and all $u\in T_{x}N$ with
$u\bot\operatorname{grad}r(x).$
\end{lemma}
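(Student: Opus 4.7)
The natural approach is Jacobi field comparison along the radial geodesic from $o$ to $x$. Set $s:=r(x)$, normalise $|u|=1$, and let $\gamma:[0,s]\to N$ be the minimising unit-speed geodesic from $o$ to $x$. Take the unique Jacobi field $J$ along $\gamma$ with $J(0)=0$ and $J(s)=u$; since $u\perp\gamma'(s)$, $J$ remains perpendicular to $\gamma'$ throughout. Because $J$ is the variation field of a pencil of unit-speed geodesics emanating from $o$, along which $\operatorname{grad}r$ coincides with the radial velocity, a direct computation from $[\partial_\theta,\partial_t]=0$ yields the standard identity $\operatorname*{Hess}r(x)(u,u)=\langle J'(s),J(s)\rangle$. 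This reduces the Hessian estimate to a lower bound on this inner product.

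Next I would control $j(t):=|J(t)|$ by a Sturm-type differential inequality. Differentiating $j^2=\langle J,J\rangle$ twice and inserting the Jacobi equation gives
\[
jj''=|J'|^2-(j')^2-\langle R(J,\gamma')\gamma',J\rangle.
\]
Cauchy--Schwarz yields $(j')^2\leq|J'|^2$, and the curvature hypothesis, combined with $J\perp\gamma'$, bounds the curvature term by $k(t)j^2$. Putting the pieces together gives
\[
j''(t)+k(t)\,j(t)\geq 0,\qquad t\in(0,s].
\]
The Taylor expansion $J(t)=tJ'(0)+O(t^3)$ (using $J''(0)=0$) shows that $j$ extends smoothly to $t=0$ with $j(0)=0$ and $j'(0)=|J'(0)|$; moreover $j(t)>0$ on $(0,s]$ since a Hadamard manifold carries no conjugate points.

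The comparison with $F$ is then effected via the Wronskian $W(t):=j(t)F'(t)-j'(t)F(t)$. A one-line computation using both ODEs gives $W'=-F\,(j''+kj)$. Since $F(0)=0$, $F'(0)=1$ and $F''=-kF\geq 0$, we have $F>0$ on $(0,\infty)$, so $W'\leq 0$. The joint vanishing $j(0)=F(0)=0$ yields $W(0)=0$, hence $W\leq 0$ on $[0,s]$, equivalently $j'/j\geq F'/F$ on $(0,s]$. Evaluating at $t=s$, where $j(s)=1$ and $j'(s)=\langle J'(s),J(s)\rangle=\operatorname*{Hess}r(x)(u,u)$, and then rescaling to an arbitrary $u\perp\operatorname{grad}r(x)$, finishes the proof.

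The one delicate point is that the quotients $j'/j$ and $F'/F$ individually blow up like $1/t$ at $t=0$, so no direct pointwise comparison is possible near the origin; introducing the Wronskian $W$, which extends continuously to $t=0$ with $W(0)=0$, is the device that converts the differential inequality into the desired pointwise bound.
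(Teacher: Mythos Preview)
Your proof is correct. The paper takes a shorter, black-box route: it introduces an auxiliary rotationally symmetric manifold $S$ with radial sectional curvature $k(s)$, notes that in $S$ the Hessian of the distance function equals $F'/F$, and then invokes the Hessian Comparison Theorem (Theorem~1.1 of Schoen--Yau \cite{SY}) to transfer the inequality to $N$. Your argument, by contrast, unpacks that comparison theorem from scratch: you express $\operatorname*{Hess}r(u,u)$ as $\langle J'(s),J(s)\rangle$ for the radial Jacobi field with $J(0)=0$, $J(s)=u$, derive the Sturm inequality $j''+kj\geq 0$ for $j=|J|$, and conclude $j'/j\geq F'/F$ via the Wronskian $W=jF'-j'F$. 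The advantage of your approach is that it is entirely self-contained and makes the mechanism of the curvature comparison explicit; the paper's version is terser but depends on an external reference.
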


\begin{proof}
Let $S$ be the rotationally symmetric manifold with origin $o_{S}\in S$ such
that, in polar coordinates with origin $o_{S}$, the metric of $S$ is given by
$dt^{2}=ds^{2}+F(s)^{2}d\Theta^{2}.$ If $z\in S\backslash\{o_{S}\}$ and
$r_{S}(s)=d_{S}(z,o_{S})$ then
\[
K_{S}(z)(\operatorname{grad}r_{S},Z)=-\frac{F^{\prime\prime}(r_{S}%
(z))}{F(r_{S}(z))}=k(r_{S}(z)),
\]
for any $Z\in T_{z}S,$ $Z\bot\operatorname{grad}r_{S}$ (see \cite{Ch}). We
notice that since $k(s)\leq0,$ the exponential map of $S$ at $o_{S}$ is a
diffeomorphism and then $r_{S}$ is smooth on $S\backslash\{o_{S}\}$.

Let $x\in N\backslash\{o\}$ be given. The same proof as of the usual Hessian
Comparison Theorem (see Theorem 1.1 of \cite{SY}) gives%
\begin{align*}
\operatorname*{Hess}r\left(  x\right)  \left(  u,u\right)   &  \geq
\operatorname*{Hess}r_{S}\left(  z\right)  \left(  u_{S},u_{S}\right)
=\frac{F^{\prime}(r_{S}(z))}{F(r_{S}(z))}\left\langle u_{S},u_{S}\right\rangle
_{S}\\
&  =\frac{F^{\prime}(r(x))}{F(r(x))}\left\langle u,u\right\rangle ,
\end{align*}
where $z\in S,$ $r_{S}(z)=r(x),$ and $u_{S}$ is any vector in $T_{z}S$
orthogonal to $\operatorname{grad}r_{S}(z)$ such that $\left\langle
u,u\right\rangle =\left\langle u_{S},u_{S}\right\rangle _{S}.$
\end{proof}

\begin{corollary}
\label{hes2}Setting $G(s)=\int^{s}F(t)dt$ the inequality%
\[
\operatorname*{Hess}G\circ r\left(  x\right)  \left(  u,u\right)  \geq
F^{\prime}(r(x))\left\langle u,u\right\rangle
\]
holds for all $x\in N\backslash\{o\}$ and $u\in T_{x}N.$
\end{corollary}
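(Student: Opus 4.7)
The plan is to use the chain-rule formula for the Hessian of a composition $G\circ r$ and to reduce matters to the already proved Hessian estimate of Lemma \ref{hes1}. Specifically, for any smooth $h\colon\mathbb{R}\to\mathbb{R}$ and any smooth $f\colon N\to\mathbb{R}$ one has
\[
\operatorname{Hess}(h\circ f)(u,u)=h''(f)\langle\operatorname{grad}f,u\rangle^{2}+h'(f)\operatorname{Hess}f(u,u).
\]
Apply this with $h=G$ and $f=r$, noting that $G'=F$ and $G''=F'$, to obtain
\[
\operatorname{Hess}(G\circ r)(x)(u,u)=F'(r)\langle\operatorname{grad}r,u\rangle^{2}+F(r)\operatorname{Hess}r(u,u).
\]

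Next I would decompose $u\in T_{x}N$ into its components parallel and orthogonal to $\operatorname{grad}r(x)$, writing $u=u^{\parallel}+u^{\perp}$ with $u^{\parallel}=\langle u,\operatorname{grad}r\rangle\operatorname{grad}r$; this is legitimate because $|\operatorname{grad}r|=1$ on $N\setminus\{o\}$. Since the integral curves of $\operatorname{grad}r$ are unit-speed geodesics, $\operatorname{Hess}r(\operatorname{grad}r,\cdot)\equiv0$, and therefore $\operatorname{Hess}r(u,u)=\operatorname{Hess}r(u^{\perp},u^{\perp})$. Applying Lemma \ref{hes1} to the orthogonal piece gives
\[
\operatorname{Hess}r(u,u)\;\geq\;\frac{F'(r)}{F(r)}\langle u^{\perp},u^{\perp}\rangle.
\]

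Finally I would combine: using $\langle u,u\rangle=\langle\operatorname{grad}r,u\rangle^{2}+\langle u^{\perp},u^{\perp}\rangle$, the two displayed formulas yield
\[
\operatorname{Hess}(G\circ r)(u,u)\;\geq\;F'(r)\langle\operatorname{grad}r,u\rangle^{2}+F(r)\cdot\frac{F'(r)}{F(r)}\langle u^{\perp},u^{\perp}\rangle\;=\;F'(r(x))\langle u,u\rangle,
\]
which is the claimed inequality. There is essentially no obstacle here: the only things to be careful about are the positivity of $F$ on $(0,\infty)$ (so that the division by $F(r)$ in Lemma \ref{hes1} is legitimate) and the fact that $F(r)>0$ is needed to preserve the inequality when multiplying through; both are immediate from $F'(0)=1$, $F''=-kF\geq0$, hence $F'\geq1$ and $F(s)\geq s>0$ for $s>0$, as already noted in the proof of Lemma \ref{Fsao}(ii).
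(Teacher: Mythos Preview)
Your proof is correct and follows essentially the same approach as the paper: both use the chain-rule identity $\operatorname{Hess}(G\circ r)(u,u)=G''(r)\langle\operatorname{grad}r,u\rangle^{2}+G'(r)\operatorname{Hess}r(u,u)$, treat the radial and tangential directions separately via $\operatorname{Hess}r(\operatorname{grad}r,\cdot)=0$ and Lemma~\ref{hes1}, and then combine. If anything, your write-up is slightly more explicit than the paper's in spelling out the combination step and the positivity of $F$.
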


\begin{proof}
One has
\[
\operatorname*{Hess}G\circ r\left(  u,u\right)  =G^{\prime}%
(r)\operatorname*{Hess}r\left(  u,u\right)  +G^{\prime\prime}\left\langle
\operatorname{grad}r,u\right\rangle ^{2},
\]
so that for $u\bot\operatorname{grad}r(x)$ we obtain from Lemma \ref{hes1}
\[
\operatorname*{Hess}G\circ r\left(  u,u\right)  =F(r)\operatorname*{Hess}%
r\left(  u,u\right)  \geq F^{\prime}(r)\left\langle u,u\right\rangle .
\]
Since $\operatorname*{Hess}r\left(  \operatorname{grad}r,\operatorname{grad}%
r\right)  =0$ we get
\[
\operatorname*{Hess}G\circ r\left(  \operatorname{grad}r,\operatorname{grad}%
r\right)  =G^{\prime\prime}(r)=F^{\prime}(r).
\]

\end{proof}

We now prove an extension of the classical monotonicity formula for minimal
surfaces \cite{An}. It will be obvious from the proof that a corresponding
result holds for higher dimensional minimal submanifolds. The proof is an
adaptation of the proof of Theorem 1 of \cite{An} and we refer the reader to
this paper for details.

\begin{proposition}
\label{mon}Let $M$ be a minimal surface in $N$ which has no boundary inside
some geodesic ball $B_{R}$ of $N$ centered at $o.$ Assume that $k^{+}(s)\leq
k(s)$ for $s\in\left[  0,+\infty\right)  $ where $k\leq0$ is a continuous
non-increasing function$.$ Let $F$ be the function determined by $k$ as in
Lemma \ref{Fsao}$.$ Then the function
\begin{equation}
r\mapsto\frac{\operatorname*{Area}(M\cap B_{r})}{\int_{0}^{r}F(t)dt},\text{
}0<r\leq R \label{mni}%
\end{equation}
is non-decreasing$.$
\end{proposition}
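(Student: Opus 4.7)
The strategy is to adapt the derivation of the Euclidean monotonicity formula, with Corollary \ref{hes2} replacing the Euclidean Hessian identity. Set $A(r) := \mathrm{Area}(M \cap B_r)$, $L(r) := \int_{\partial M_r}|\nabla^M r|\,ds$, and $I(r) := \int_{M_r} F'(r(x))\,dA$. Since $M$ is a two-dimensional minimal surface, the intrinsic Laplacian $\Delta_M$ equals the trace over $T_xM$ of the ambient Hessian, so Corollary \ref{hes2} yields $\Delta_M(G\circ r) \geq 2F'(r(x))$ pointwise on $M$. For a.e.\ $r \in (0,R]$ the level set $M \cap \partial B_r$ is a smooth curve (Sard), and since $\partial M_r \subset \partial B_r$ by hypothesis, applying the divergence theorem on $M$ with outward conormal $\nu = \nabla^M r/|\nabla^M r|$ gives
\[
F(r)\,L(r) \;=\; \int_{M_r}\Delta_M(G\circ r)\,dA \;\geq\; 2\,I(r).
\]
The coarea identity $A'(r) = \int_{\partial M_r}|\nabla^M r|^{-1}\,ds$ combined with $|\nabla^M r|\leq 1$ gives $A'(r)\geq L(r)$, hence the \emph{master inequality} $F(r)\,A'(r) \geq 2\,I(r)$.

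\textbf{The main step.} Monotonicity of $A/G$ is equivalent to $A'G \geq AF$, whereas the master inequality only provides $A'G \geq 2IG/F$. It is therefore enough to show that the auxiliary quantity $P(r) := 2G(r)I(r) - F(r)^2 A(r)$ is non-negative. Using $I'(r) = F'(r)A'(r)$ (coarea) and $G'=F$, one computes
\[
P'(r) \;=\; 2F(r)\bigl(I(r)-F'(r)A(r)\bigr) \,+\, A'(r)\bigl(2G(r)F'(r)-F(r)^2\bigr).
\]
The factor $h(r):=2GF'-F^2$ is non-negative because $h(0)=0$ and $h'(r)=2G(r)F''(r)=-2k(r)G(r)F(r)\geq 0$. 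Substituting the master inequality $A'(r)\geq 2I(r)/F(r)$ into the second term (permissible since $h\geq 0$) and simplifying, the two contributions balance and one obtains the self-improving estimate
\[
P'(r) \;\geq\; \frac{2F'(r)}{F(r)}\,P(r), \qquad \text{equivalently} \qquad \bigl(P/F^2\bigr)'(r) \;\geq\; 0.
\]

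\textbf{Initial condition and conclusion.} I would then verify that $P(r)/F(r)^2 \to 0$ as $r\to 0^+$: this is trivial when $o\notin M$ (both $A$ and $I$ vanish for $r<d(o,M)$), and otherwise follows from the tangent-plane expansion $A(r),\,I(r) = \pi r^2 + o(r^2)$ together with $G(r)/F(r)^2 \to 1/2$. The monotonicity of $P/F^2$ then forces $P\geq 0$ throughout $(0,R]$, whereupon
\[
\left(\frac{A}{G}\right)'(r) \;=\; \frac{A'(r)G(r)-A(r)F(r)}{G(r)^2} \;\geq\; \frac{P(r)}{F(r)G(r)^2} \;\geq\; 0.
\]
The main technical difficulty is identifying the auxiliary function $P$ and extracting the self-improving inequality $P' \geq (2F'/F)P$ from the algebra; apart from the Hessian comparison in Step 1, the hypothesis $k\leq 0$ enters essentially only through $F''=-kF\geq 0$, which secures the positivity $2GF'\geq F^2$ needed to run the substitution.
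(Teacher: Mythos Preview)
Your argument is correct, and it is genuinely different from the paper's. The paper follows Anderson's original scheme: it tests the first variation identity against the radial vector field $E=f(r)\chi_{s}\,r\operatorname{grad} r$, uses Lemma~\ref{hes1} together with $rF'/F\geq 1$ to bound $\sum_j\langle\nabla_{e_j}(r\operatorname{grad} r),e_j\rangle$ from below by $1+rF'/F$, and then chooses $f=G/(rF)$ so that the resulting differential relation collapses directly to $A(r)\leq rf(r)A'(r)=(G/F)A'(r)$, i.e.\ $(A/G)'\geq 0$. The key structural input in that route is Lemma~\ref{Fsao}(ii), which guarantees $f'\leq 0$ and is precisely where the hypothesis that $k$ is non-increasing is consumed. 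Your route bypasses this entirely: you integrate $\Delta_M(G\circ r)\geq 2F'(r)$ over $M_r$ via Corollary~\ref{hes2}, obtain only the weaker relation $FA'\geq 2I$, and then recover the missing strength through the auxiliary quantity $P=2GI-F^{2}A$ and its self-improving inequality $(P/F^{2})'\geq 0$. The trade-off is clear: the paper's argument is shorter and more transparently tied to the classical test-vector-field method, but it needs the monotonicity of $k$; your argument is a bit more intricate (one has to discover $P$), but it uses only $k\leq 0$ through $F''=-kF\geq 0$, so it actually proves the proposition under a strictly weaker hypothesis than stated.
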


\begin{proof}
For any point $z\in M,$ let $e_{1},e_{2}$ be an orthonormal basis of $T_{z}M$
such that $e_{2}=\operatorname{grad}^{\top}r/\left\vert \operatorname{grad}%
^{\top}r\right\vert ,$ where $\operatorname{grad}^{\top}r$ is the orthogonal
projection of $\operatorname{grad}r$ on $T_{z}M.$ From the first variational
formula we have%
\[
\sum_{j=1}^{2}\int_{M}\left\langle \nabla_{e_{j}}E,e_{j}\right\rangle =0
\]
where $E$ is any smooth vector field with compact support on $M.$ Choosing $E$
of the form%
\[
E=f(r)\chi_{s}r\operatorname{grad}r,
\]
where $f$ is a smooth function satisfying $f^{\prime}\leq0,$ to be explicitly
given later, and $\chi_{s}$ is a smooth approximation to the characteristic
function of $\left[  0,s\right]  ,\ $we obtain
\[
\int f\chi_{s}\sum_{j=1}^{2}\left\langle \nabla_{e_{j}}r\operatorname{grad}%
r,e_{j}\right\rangle =-\int\left(  f^{\prime}\chi_{s}+f\chi_{s}^{\prime
}\right)  r\left\vert \operatorname{grad}^{\top}r\right\vert ^{2}%
\]
and then
\[
\int\left(  f\chi_{s}\sum_{j=1}^{2}\left\langle \nabla_{e_{j}}%
r\operatorname{grad}r,e_{j}\right\rangle +rf^{\prime}\chi_{s}\right)
\leq-\int rf\chi_{s}^{\prime}%
\]
Using Lemma \ref{hes1} we obtain%
\[
\sum_{j=1}^{2}\left\langle \nabla_{e_{j}}r\operatorname{grad}r,e_{j}%
\right\rangle \geq\left\langle e_{2},\operatorname{grad}r\right\rangle
^{2}+\left\langle e_{2},e_{2}^{\bot}\right\rangle ^{2}\frac{rF^{\prime}%
(r)}{F(r)}+\frac{rF^{\prime}(r)}{F(r)}%
\]
where $e_{2}^{\bot}$ is the unit vector along the projection of $e_{2}$ on the
tangent plane of the geodesic sphere centered at $o$. From (\ref{efe}) it
follows that%
\[
\left\langle e_{2},\operatorname{grad}r\right\rangle ^{2}+\left\langle
e_{2},e_{2}^{\bot}\right\rangle ^{2}\frac{rF^{\prime}(r)}{F(r)}\geq
\left\langle e_{2},\operatorname{grad}r\right\rangle ^{2}+\left\langle
e_{2},e_{2}^{\bot}\right\rangle ^{2}=1
\]
and then
\begin{equation}
\int\chi_{s}\left[  f\left(  1+\frac{rF^{\prime}}{F}\right)  +rf^{\prime
}\right]  \leq-\int rf\chi_{s}^{\prime}. \label{monn}%
\end{equation}

Choosing $f$ as a solution of the ODE
\[
f(r)\left(  1+\frac{rF^{\prime}(r)}{F(r)}\right)  +rf^{\prime}(r)=1
\]
with $f(0)=1/2$ we obtain%
\[
f(r)=\frac{\int_{0}^{r}F(t)dt}{rF(r)}.
\]
From Lemma \ref{Fsao} (ii) we have that $f^{\prime}\leq0.$ Then, setting
$v(r)=\operatorname*{Vol}\left(  B_{r}\right)  $\ and using (\ref{monn}) we
arrive at $v(r)\leq rf(r)v^{\prime}(r)$ from which we easily obtain (\ref{mni}).
\end{proof}

\bigskip

In what follows we want to compare the metric on the given manifold $N$ with
the metric of a rotationally symmetric complete background manifold $S_{0}$ of
non-positive sectional curvature $k_{0}$ given as a function of the distance
to the origin $o_{0}$ in $S_{0}.$ As in Lemma \ref{Fsao} (iii) we assume that
$k_{0}(s)<0$ is continuous and non-increasing and, as before, we denote by
$F_{0}$ the solution of $F_{0}^{\prime\prime}+k_{0}F_{0}=0,$ $F_{0}(0)=0,$
$F_{0}^{\prime}(0)=1.$

\begin{lemma}
\label{jac}Let $\gamma:\left[  0,+\infty\right)  \rightarrow N$ be a unit
speed geodesic, $\gamma(0)=o,$ and $J$ be a normal Jacobi field along
$\gamma,$ $J(0)=0,$ $\left\Vert J^{\prime}(0)\right\Vert =1.$

(i) If $k^{-}(s)\geq k(s)$ for some continuous function $k$ and if $F$ is a
solution of (\ref{F}) then it follows that $\left\Vert J(s)\right\Vert \leq
F(s),$ $0\leq s<+\infty.$ Likewise, if $k^{+}(s)\leq k(s)\leq0$ then one has
$\left\Vert J(s)\right\Vert \geq F(s).$

(ii) We suppose that $k^{+}(s)\leq k_{0}(s),$ $0\leq s<\infty$ and%
\begin{equation}
\frac{k^{-}-k_{0}}{\sqrt{-k_{0}}}\in L^{1}\left(  \left[  0,\infty\right)
\right)  . \label{k}%
\end{equation}
Then the estimate%
\[
F_{0}(s)\leq\left\Vert J(s)\right\Vert \leq e^{-C}F_{0}(s)
\]
holds with%
\[
C=\frac{^{\pi}}{2}\int_{0}^{\infty}\frac{k^{-}(s)-k_{0}(s)}{\sqrt{-k_{0}(s)}%
}ds.
\]

\end{lemma}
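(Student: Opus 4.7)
I would prove (i) by separating the two inequalities: the lower bound admits a scalar Sturm-type argument, while the upper bound requires a matrix Riccati comparison, since the Cauchy--Schwarz defect between $|J'|^{2}$ and $(\Vert J\Vert')^{2}$ prevents a direct scalar estimate in dimensions $\geq 3$. Part (ii) then follows by combining (i) with Lemma~\ref{Fsao}(iii).

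For the lower bound $\Vert J\Vert \geq F$ (under $k^{+} \leq k$), set $f(s) = \Vert J(s)\Vert$. Differentiating $\Vert J\Vert^{2}$ twice, using the Jacobi equation $J'' + R(J,\gamma')\gamma' = 0$ and the Cauchy--Schwarz inequality $(f')^{2} \leq |J'|^{2}$, one obtains $f'' + K(s)f \geq 0$ on $\{f>0\}$, where $K(s)$ is the sectional curvature of the plane spanned by $\gamma'(s)$ and $J(s)$. Since $K \leq k^{+} \leq k$, this yields $f'' + kf \geq 0$. A Wronskian comparison with $F'' + kF = 0$ then shows that $f/F$ is non-decreasing on $(0,\infty)$ with limit $f'(0)/F'(0) = 1$ at the origin, so $f \geq F$.

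For the upper bound $\Vert J\Vert \leq F$ (under $k^{-} \geq k$), I pass to the shape operator $A(s)$ of the geodesic sphere of radius $s$ around $o$, defined on $\gamma'(s)^{\perp}$ by $A(s)J(s) = J'(s)$ for each radial Jacobi field with $J(0) = 0$ (well defined since Hadamard manifolds have no conjugate points). It satisfies the matrix Riccati equation $A' + A^{2} + R = 0$, where $R(s)\colon v\mapsto R(v,\gamma'(s))\gamma'(s)$ is the radial curvature operator. With $\phi = F'/F$ solving $\phi' + \phi^{2} + k = 0$ and $B := A - \phi I$ symmetric, a short computation gives
\[
(F^{2}B)' \;=\; -F^{2}\bigl(B^{2} + R - kI\bigr).
\]
The hypothesis $k^{-} \geq k$ is exactly $R \geq kI$, so the right-hand side is negative semidefinite; and since both $A(s)$ and $\phi(s)I$ have the common $(1/s)I$ leading singularity at $s=0$, we have $F^{2}B \to 0$ there, whence $F^{2}B \leq 0$ on $(0,\infty)$, i.e. $A \leq \phi I$. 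Then $(\Vert J\Vert^{2})' = 2\langle AJ,J\rangle \leq 2\phi\Vert J\Vert^{2}$, so $\Vert J\Vert/F$ is non-increasing and tends to $1$ as $s \to 0$, giving $\Vert J\Vert \leq F$.

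For (ii), the lower bound $\Vert J\Vert \geq F_{0}$ is (i) applied with $k = k_{0}$, permitted since $k^{+} \leq k_{0}$. For the upper bound, apply the Riccati argument with $k$ a continuous function $\leq k^{-}$ approximating $k^{-}$ from below (dominated convergence controls the relevant integrals through (\ref{k})) to obtain $\Vert J\Vert \leq F_{k^{-}}$ in the limit; then Lemma~\ref{Fsao}(iii) applied to the pair $(k^{-},k_{0})$ yields $F_{k^{-}} \leq e^{-C}F_{0}$, since $k^{-} \leq k^{+} \leq k_{0}$ forces $|k^{-}-k_{0}| = k_{0}-k^{-}$, making the constant produced there exactly $-C$ with $C$ as in the statement. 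Combining gives $\Vert J\Vert \leq e^{-C}F_{0}$. The main obstacle is the upper bound in (i): the scalar Sturm estimate fails because the rotation defect $|J'|^{2} - (f')^{2}$ is nonnegative but not controlled from above in higher dimensions, so one must encode the full radial bound $R \geq kI$ as an operator inequality via the matrix Riccati, which yields a dimension-free comparison at the level of the shape operator.
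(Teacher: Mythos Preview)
Your argument is correct. For part (ii) you do exactly what the paper does: combine the two inequalities of (i) with Lemma~\ref{Fsao}(iii), reading off the constant $-C$ from $|k^{-}-k_{0}|=k_{0}-k^{-}$. (Your approximation of $k^{-}$ by continuous functions is in fact unnecessary: under the paper's standing hypothesis that $N$ is $C^{\infty}$, the sectional curvature is continuous on the unit Grassmannian bundle and the geodesic spheres are compact and vary smoothly, so $s\mapsto k^{-}(s)$ is continuous on $(0,\infty)$ and Lemma~\ref{Fsao}(iii) applies directly to the pair $(k^{-},k_{0})$. But the extra care does no harm.)

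For part (i) the paper takes a much shorter route: it simply invokes Rauch's comparison theorem, comparing $J$ with the Jacobi field of the same initial data in the rotationally symmetric model of curvature $k(s)$, whose norm is exactly $F(s)$. You instead supply a self-contained proof, splitting the two inequalities and treating them by different mechanisms: a scalar Sturm/Wronskian argument for $\Vert J\Vert\geq F$, and a matrix Riccati comparison for the shape operator $A\leq (F'/F)I$ to obtain $\Vert J\Vert\leq F$. Your diagnosis of why the scalar argument alone cannot give the upper bound is accurate --- the Cauchy--Schwarz defect $|J'|^{2}-(\Vert J\Vert')^{2}\geq 0$ makes the differential inequality $\Vert J\Vert''+K\Vert J\Vert\geq 0$ point the wrong way --- and your Riccati computation $(F^{2}B)'=-F^{2}(B^{2}+R-kI)$ together with the asymptotics $F^{2}B\to 0$ at the origin is clean and correct. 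What you gain over the paper is a proof that does not depend on a black-box citation; what the paper gains is brevity, since both bounds are standard instances of Rauch.
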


\begin{proof}
(i) It is an immediate consequence of Rauch's comparison theorem by comparing
$\left\Vert J(s)\right\Vert $ with the norm of a Jacobi field, satisfying the
same initial conditions as $J,$ in a rotationally symmetric manifold with
curvature $k(s)$.

(ii) The statement follows directly from (i) and Lemma \ref{Fsao} (iii).
\end{proof}

\begin{corollary}
\label{six}Let $\operatorname*{Exp}:T_{o}N\rightarrow N$ be the exponential
map at the base point $o$ and let the condition (\ref{k}) above be satisfied.
Then there is a constant $C$ such that%
\[
\left\Vert w\right\Vert \frac{F_{0}\left(  \left\Vert v\right\Vert \right)
}{\left\Vert v\right\Vert }\leq\left\Vert d\operatorname*{Exp}(v)w\right\Vert
\leq C\left\Vert w\right\Vert \frac{F_{0}\left(  \left\Vert v\right\Vert
\right)  }{\left\Vert v\right\Vert }%
\]
holds for $v,w\in T_{o}N$ with $v\bot w,$ $v\neq0.$
\end{corollary}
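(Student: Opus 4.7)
The plan is to recognize the norm $\|d\operatorname{Exp}(v)w\|$ as (a rescaling of) the norm of a normal Jacobi field, and then apply Lemma \ref{jac}(ii) directly. Concretely, set $\hat{v}=v/\|v\|$ and $\hat{w}=w/\|w\|$, and consider the unit speed geodesic $\gamma(s)=\operatorname{Exp}(s\hat{v})$ together with the variation $\alpha(s,t)=\operatorname{Exp}(s(\hat{v}+t\hat{w}))$. The variation vector field
\[
J(s)=\left.\frac{\partial}{\partial t}\right|_{t=0}\alpha(s,t)=s\,d\operatorname{Exp}(s\hat{v})(\hat{w})
\]
is a Jacobi field along $\gamma$ with $J(0)=0$ and $J'(0)=\hat{w}$, so $\|J'(0)\|=1$; and since $\hat{w}\perp\hat{v}=\gamma'(0)$, the field $J$ is normal to $\gamma$.

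Next I would apply Lemma \ref{jac}(ii), which under the integrability hypothesis (\ref{k}) gives
\[
F_0(s)\le\|J(s)\|\le e^{-C}F_0(s),\qquad 0<s<\infty,
\]
with $C=\frac{\pi}{2}\int_0^\infty\frac{k^-(t)-k_0(t)}{\sqrt{-k_0(t)}}\,dt$, a finite number depending only on $N$ and the background metric.

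Finally I would unwind the scaling. Evaluating at $s=\|v\|$ gives $s\hat{v}=v$ and
\[
J(\|v\|)=\|v\|\,d\operatorname{Exp}(v)(\hat{w}),
\]
so by linearity of $d\operatorname{Exp}(v)$ in its second argument,
\[
\|d\operatorname{Exp}(v)w\|=\|w\|\,\|d\operatorname{Exp}(v)(\hat{w})\|=\frac{\|w\|}{\|v\|}\,\|J(\|v\|)\|.
\]
Combining with the two sided bound on $\|J(\|v\|)\|$ yields
\[
\|w\|\,\frac{F_0(\|v\|)}{\|v\|}\le\|d\operatorname{Exp}(v)w\|\le e^{-C}\|w\|\,\frac{F_0(\|v\|)}{\|v\|},
\]
which is exactly the claim with constant $e^{-C}$.

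There is no real obstacle here: the content is entirely in Lemma \ref{jac}(ii), and the corollary is a routine translation via the standard Jacobi field formula for the differential of the exponential map. The only small point to verify is that the normality hypothesis and the unit norm condition on $J'(0)$ in Lemma \ref{jac} hold after the rescaling $v\mapsto\hat{v}$, $w\mapsto\hat{w}$, which we handled above.
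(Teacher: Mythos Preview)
Your proof is correct and follows essentially the same route as the paper: identify $d\operatorname{Exp}(v)w$ with a rescaled normal Jacobi field along the unit speed geodesic in direction $v/\|v\|$, then invoke Lemma \ref{jac}(ii). The paper phrases the rescaling slightly differently (starting from the Jacobi field along $t\mapsto\operatorname{Exp}(tv)$ and then renormalizing), but the argument is the same.
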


\begin{proof}
As is well known, $d\operatorname*{Exp}(v)w=J(1)$ where $J$ is the Jacobi
field along $t\mapsto\operatorname*{Exp}(tv)$ with initial condition $J(0)=0$
and $J^{\prime}(0)=w.$ Let $J_{1}$ be the Jacobi field along $t\mapsto
\operatorname*{Exp}(t\left\Vert v\right\Vert ^{-1}v)$ with $J_{1}(0)=0,$
$J_{1}^{\prime}(0)=\left\Vert w\right\Vert ^{-1}w.$ Then
\[
\widetilde{J}:=\frac{\left\Vert w\right\Vert }{\left\Vert v\right\Vert }%
J_{1}\left(  \left\Vert v\right\Vert t\right)
\]
is a Jacobi field along $t\mapsto\operatorname*{Exp}(tv)$ with $\widetilde
{J}(0)=0,$ $\widetilde{J}^{\prime}(0)=\left\Vert w\right\Vert J_{1}^{\prime
}(0)=w.$ Hence, $\widetilde{J}=J,$ $J(1)=\frac{\left\Vert w\right\Vert
}{\left\Vert v\right\Vert }J_{1}\left(  \left\Vert v\right\Vert t\right)  $
and the Corollary follows from Lemma \ref{jac} (ii).
\end{proof}

\bigskip

In the next lemma we obtain a special metric in the ball model for complete
rotationally symmetric metrics with sectional curvature bounded by above by a
negative constant:

\begin{lemma}
\label{BR}Let a complete rotationally symmetric metric of radial sectional
curvature $k\ $\ be given$,$ where $k$ is a continuous function of arclenght
$s\in\left[  0,\infty\right)  .$ We assume furthermore that $k(s)\leq-a^{2}$
for some constant $a>0$ and for all $s.$ Then there are coordinates defined in
the unit ball $B=\left\{  x\in\mathbb{R}^{n}\text{
$\vert$
}\left\vert x\right\vert <1\right\}  $ in which the metric takes the form%
\[
f^{\prime}(\left\vert x\right\vert )^{2}dx^{2}%
\]
where $dx^{2}$ stands for the Euclidean metric and $f\in C^{2}\left(  \left[
0,1\right)  \right)  \cap C^{3}\left(  \left(  0,1\right)  \right)  $ is the
inverse function of
\[
g(r):=e^{-\int_{r}^{+\infty}\frac{dt}{F(t)}},\text{ }0<r<\infty,
\]
and $F$ is the solution of (\ref{F}) with the given curvature $k.$ The
function $g$ is of class $C^{2}\left(  \left[  0,+\infty\right)  \right)  \cap
C^{3}\left(  \left(  0,+\infty\right)  \right)  $ with $g^{\prime}(r)>0$ for
$r\geq0$ and hence $f^{\prime}(t)>0$ for $t\in\left[  0,1\right)  .$
\end{lemma}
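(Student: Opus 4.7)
The plan is to construct radial coordinates on the ball $B$ that make the given rotationally symmetric metric take the prescribed conformally Euclidean form. Writing the metric in geodesic polar coordinates centered at the origin as $dr^2 + F(r)^2 d\Theta^2$ for $r \in [0,\infty)$, I look for a new radial variable $t \in [0,1)$ so that the expression becomes $\lambda(t)^2 (dt^2 + t^2 d\Theta^2)$. Matching coefficients forces $\lambda(t)\,dt = dr$ and $\lambda(t)\,t = F(r)$, so eliminating $\lambda$ gives the separable ODE $dr/F(r) = dt/t$. Normalizing so that $t \to 1$ as $r \to \infty$ produces the function of the statement, $\log t = -\int_r^\infty ds/F(s)$, that is $t = g(r)$, and the desired conformal factor will then be $\lambda(t) = f'(t)$ with $f = g^{-1}$.

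Next I would justify that $g$ is a $C^2$-diffeomorphism $[0,\infty) \to [0,1)$. Convergence of $\int^\infty ds/F(s)$ follows from Lemma \ref{Fsao}(i): the assumption $k(s) \leq -a^2$ gives $F'/F \geq a$, so $F(s) \geq c\,e^{as}$ for large $s$. Divergence of $\int_0 ds/F(s)$ comes from $F(0) = 0$, $F'(0) = 1$, which yields $F(s) \sim s$ and a logarithmic divergence at $0$. Consequently $g(r) \to 0$ as $r \to 0^+$ and $g(r) \to 1$ as $r \to \infty$. Differentiation gives $g'(r) = g(r)/F(r) > 0$ for $r > 0$; inverted, this reads $F(f(t)) = t f'(t)$, which is the identity that will drive the metric computation.

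With this identity in hand the metric claim is immediate: in the coordinates $x \in B$ defined by $|x| = t = g(r)$ and $x/|x| = \Theta$, we have $r = f(|x|)$, so
\begin{equation*}
dr^2 + F(r)^2 d\Theta^2 \;=\; f'(t)^2 dt^2 + \bigl(t f'(t)\bigr)^2 d\Theta^2 \;=\; f'(|x|)^2 \,dx^2,
\end{equation*}
which is precisely the form asserted. Smoothness for $r > 0$ is inherited from the integral representation: since $F \in C^2$ and $F > 0$ on $(0,\infty)$, the antiderivative used to define $g$ is $C^3$, hence $g, f \in C^3$ away from the origin.

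The main technical point is the regularity of $g$ and $f$ at the origin. To push $g$ to $C^2([0,\infty))$, I would expand $F(s) = s + O(s^3)$ near $s = 0$, using $F''(0) = -k(0) F(0) = 0$, so that $1/F(s) - 1/s = O(s)$ is integrable at $0$. This gives $\int_r^\infty ds/F(s) = -\log r + C_0 + O(r^2)$ and therefore $g(r) = e^{-C_0} r\,(1 + O(r^2))$. Differentiating, or equivalently using the explicit formula $g''(r) = g(r)\bigl(1 - F'(r)\bigr)/F(r)^2$ together with $F'(r) = 1 + O(r^2)$, one obtains $g'(0) = e^{-C_0} > 0$ and $g''(0) = 0$, both attained as continuous limits. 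Since $g'(0) > 0$, the inverse function theorem yields $f \in C^2([0,1))$ with $f'(t) > 0$ there, completing the proof. The only delicate balance is that we gain exactly $C^2$ at the origin from $k \in C^0$, which is why no smoothness beyond $C^3$ away from the boundary can be claimed.
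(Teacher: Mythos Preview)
Your proposal is correct and follows essentially the same route as the paper's proof: both derive $g'(r)=g(r)/F(r)$ and $g''(r)=g(r)(1-F'(r))/F(r)^{2}$, use the exponential growth of $F$ from Lemma~\ref{Fsao}(i) for convergence of the tail integral, use $F(s)\sim s$ near $0$ for the behaviour at the origin, and verify the metric identity by the substitution $x=g(r)\theta$. Your addition of the motivating ODE $dr/F(r)=dt/t$ and the sharper expansion $F(s)=s+O(s^{3})$ (whence $g''(0)=0$) go slightly beyond the paper, which merely observes that $\lim_{r\to 0}g''(r)$ exists, but the arguments are otherwise the same.
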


\begin{proof}
It follows from Lemma \ref{Fsao} (i) that $F$ grows at least exponentially so
that $\int_{r}^{+\infty}dt/F(t)$ is finite for each $r>0.$ Since $F\in
C^{2}\left(  \left[  0,+\infty\right)  \right)  ,$ $F(0)=0$ and $F^{\prime
}(0)=1$ we get%
\[
\frac{1}{F(t)}=\frac{1}{t}+b(t)
\]
for some function $b\in C^{0}\left(  \left[  0,+\infty\right)  \right)  $ from
which it follows that $g(r)\rightarrow0$ and $g(r)/r\rightarrow c$
($r\rightarrow0)$ for some $c>0.$ Hence%
\[
g^{\prime}(r)=\frac{g(r)}{F(r)}=\frac{g(r)}{r}\frac{r}{F(r)}\rightarrow
c,\text{ }r\rightarrow0
\]
and $g\in C^{1}\left(  \left[  0,+\infty\right)  \right)  $ and $g^{\prime
}(r)>0$ for $r\geq0.$ Since%
\[
g^{\prime\prime}=\frac{g^{\prime}F-gF^{\prime}}{F^{2}}=\frac{g(1-F^{\prime}%
)}{F^{2}}%
\]
we conclude that $\lim_{r\rightarrow0}g^{\prime\prime}(r)$ exists and thus
$g\in C^{2}\left(  \left[  0,+\infty\right)  \right)  .$

Introducing the coordinates $\left(  r,\theta\right)  \in\left[  0,1\right)
\times\mathbb{S}^{n-1}$ by $x=g(r)\theta$ we find by direct computation%
\[
f^{\prime}\left(  \left\vert x\right\vert \right)  ^{2}dx^{2}=dr^{2}%
+F(r)^{2}d\theta^{2}%
\]
and, by a well known formula, the radial sectional curvature of this metric
is\ $-F^{\prime\prime}/F=k.$
\end{proof}

\bigskip

In the sequel we construct a special ball model of $N.$ We denote by $S$ the
rotationally symmetric Hadamard manifold with origin $o_{S}$ with radial
sectional curvature $k_{0}$ given as a function of the distance to $o_{S}.$ We
require that $k_{0}(s)\leq-a^{2}$ for all $s$ with some $a>0.$ It follows from
Lemma \ref{BR} that $S$ is isometric to the open unit ball $B\subset
\mathbb{R}^{n}$ endowed with a metric of the form%
\begin{equation}
\left\langle u,v\right\rangle _{0}=f^{\prime}\left(  \left\vert x\right\vert
\right)  ^{2}\left(  u,v\right)  ,\text{ }u,v\in T_{x}B, \label{mb}%
\end{equation}
where $\left(  \text{ . }\right)  $ denotes the Euclidean scalar product and
$f$ is given by Lemma \ref{BR}.

\begin{proposition}
\label{BM}$N$ is isometric to an open unit ball $B\subset\mathbb{R}^{n}$ with
a metric of the form%
\begin{equation}
\left\langle u,v\right\rangle =f^{\prime}\left(  \left\vert x\right\vert
\right)  ^{2}\left\langle u,v\right\rangle _{b},\text{ }u,v\in T_{x}B,
\label{mbb}%
\end{equation}
where $f$ is the function given in Lemma \ref{BR} and $\left\langle
u,v\right\rangle _{b}$ is a Riemannian metric on $B$ which is uniformly
bounded from above and from below by the Euclidean metric.

The balls with center $0\in B$ in the metric (\ref{mbb}) above are at the same
time balls in the metric (\ref{mb}) of the same radius and the geodesics of
(\ref{mbb}) passing through $0\in B$ are straight line segments. In
particular, it follows that the asymptotic boundary $\partial_{\infty}B$ of
$B$ with respect to the metric (\ref{mbb}) is identified with the topological
boundary $\partial B$ of $B$ via the map that associates to each point of
$x\in\partial B$ the equivalence class of the geodesic ray from $0$ to $x.$
\end{proposition}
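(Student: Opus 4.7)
The model $B$ will arise by composing the exponential map $\operatorname{Exp}_o:T_o N\to N$ with the radial rescaling of Lemma \ref{BR} applied to the background curvature $k_0$. Fix an orthonormal frame at $o$ to identify $T_oN\cong\mathbb{R}^n$ and define
\[
\Psi:B\longrightarrow N,\qquad \Psi(x)=\operatorname{Exp}_o\!\bigl(f(|x|)\,x/|x|\bigr)\quad(x\neq 0),\quad \Psi(0)=o,
\]
with $f=g^{-1}$ from Lemma \ref{BR}. Since $\operatorname{Exp}_o$ is a diffeomorphism of a Hadamard manifold and $g:[0,\infty)\to[0,1)$ is a $C^2$ diffeomorphism with $g'>0$, so is $\Psi$. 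We take $\langle\cdot,\cdot\rangle$ on $B$ to be the pullback of the metric of $N$ and must show it has the form (\ref{mbb}) with $\langle\cdot,\cdot\rangle_b$ uniformly comparable to Euclidean.

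To compute the pullback, work in polar coordinates $x=\rho\theta$ with $\rho\in[0,1)$ and $\theta\in S^{n-1}$. By the Gauss lemma the radial line $\{t\theta:t\in[0,1)\}$ is mapped by $\Psi$ to a unit-speed geodesic of $N$ reparametrised by $f$, so $d\Psi(\partial_\rho)=f'(\rho)\operatorname{grad}r$, and its image is orthogonal to $d\Psi$ of any Euclidean vector $w$ tangent to $\{|x|=\rho\}$. For such a $w$ of Euclidean norm $1$, the vector $v:=f(\rho)\theta\in T_oN$ and $dv(w)\in T_oN$ are orthogonal, with $\|dv(w)\|=f(\rho)/\rho$. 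Corollary \ref{six} therefore yields
\[
\frac{F_0(f(\rho))}{\rho}\;\le\;\|d\Psi(w)\|\;\le\;C\,\frac{F_0(f(\rho))}{\rho}.
\]
Differentiating $g(r)=\exp\!\left(-\int_r^\infty dt/F_0(t)\right)$ one gets $g'(r)=g(r)/F_0(r)$, whence $f'(\rho)\rho=F_0(f(\rho))$; substituting, the previous inequalities become $f'(\rho)\le\|d\Psi(w)\|\le Cf'(\rho)$. Factoring $f'(|x|)^2$ out of the whole pulled-back metric and using that the radial/tangential split is orthogonal on both sides, we conclude that $\langle u,u\rangle=f'(|x|)^2\langle u,u\rangle_b$ for a tensor $\langle\cdot,\cdot\rangle_b$ whose quadratic form satisfies $|u|^2_{\rm Eucl}\le\langle u,u\rangle_b\le \max(1,C^2)|u|^2_{\rm Eucl}$ at every point, which is exactly (\ref{mbb}).

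For the remaining assertions, observe that $\Psi^{-1}$ sends the geodesic ball of $N$ of radius $r$ centred at $o$ onto the Euclidean ball of radius $g(r)$, and the same ball of radius $g(r)$ also represents the $r$-ball around the origin in the rotationally symmetric model of Lemma \ref{BR}; thus the metric balls of the two metrics (\ref{mb}) and (\ref{mbb}) centred at $0$ coincide setwise with the same radius. A geodesic of $N$ through $o$ has the form $t\mapsto\operatorname{Exp}_o(t\xi)$, and its $\Psi$-preimage is the straight segment $t\mapsto g(t)\xi$. Finally, the map $\xi\mapsto[\operatorname{Exp}_o(t\xi)]$ identifies $S^{n-1}\subset T_oN$ with $\partial_\infty N$ (standard Hadamard-manifold fact, cf.\ \cite{EO}), and under $\Psi^{-1}$ these rays become radial Euclidean rays converging to $\xi\in\partial B$, yielding the asserted identification $\partial_\infty B=\partial B$.

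The only nontrivial point is the Jacobi-field estimate making the tangential computation work, i.e.\ showing that the vector $d\Psi(w)$ is $d\operatorname{Exp}_o(v)$ applied to a vector orthogonal to $v$ so that Corollary \ref{six} is applicable; once this is in place everything else reduces to the elementary identities for $f,g,F_0$ already recorded in Lemma \ref{BR}.
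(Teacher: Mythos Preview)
Your proof is correct and follows essentially the same route as the paper: the map $\Psi$ you write down is exactly the paper's $\Phi=\operatorname{Exp}\circ j\circ\exp^{-1}$ once one unwinds $\exp^{-1}$ on the rotationally symmetric model $S$, and both arguments reduce to the Gauss lemma in the radial direction together with Corollary \ref{six} in the tangential direction. The only cosmetic difference is that you bypass the auxiliary manifold $S$ and invoke the scalar identity $f'(\rho)\rho=F_0(f(\rho))$ directly, whereas the paper phrases the same computation as $\|w\|_0\,F_0(\|v\|_0)/\|v\|_0=\|u\|_0$ via the exponential map of $S$.
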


\begin{proof}
Let
\[
\operatorname*{Exp}:T_{o}N\rightarrow N,\text{ }\exp:T_{0}S\rightarrow S
\]
be the corresponding exponential maps and let us choose a linear isometry
$j:T_{0}S\rightarrow T_{o}N.$ We then define the diffeomorphism%
\[
\Phi:\operatorname*{Exp}\circ j\circ\exp^{-1}:S\rightarrow N.
\]
Since the geodesics of $S$ passing through $0\in B$ are straight line segments
and since the exponential maps map straight lines through the origin to
geodesics, it is clear that $\Phi$ maps the straight lines segments passing
through $0\in B$ to geodesics of $N$ passing through the base point $o\in N.$
The classical Gauss lemma says that the exponential map is an isometry in the
radial direction, i. e.,%
\begin{equation}%
\begin{array}
[c]{c}%
\left\langle d\operatorname*{Exp}(v)(v),d\operatorname*{Exp}%
(v)(w)\right\rangle =\left\langle v,w\right\rangle ,\text{ }v,w\in T_{o}N\\
\left\langle d\exp(v)(v),d\exp(v)(w)\right\rangle =\left\langle
v,w\right\rangle _{0},\text{ }v,w\in T_{0}S.
\end{array}
\label{exp}%
\end{equation}
This implies
\begin{equation}
\left\Vert d\Phi(x)\operatorname{grad}r_{0}(x)\right\Vert =1 \label{um}%
\end{equation}
and hence $\Phi$ maps balls with center $0\in S$ onto balls in $N$ centered at
$o$ with the same radius.

We now claim that there is a constant $C>0$ such that%
\begin{equation}
\left\Vert u\right\Vert _{0}\leq\left\Vert d\Phi(x)u\right\Vert \leq
C\left\Vert u\right\Vert _{0} \label{eq}%
\end{equation}
holds for $x\in S$ and $u\in T_{x}S.$ If this is shown then Proposition
\ref{BM} is proved since (\ref{eq}) can be rewritten as%
\begin{align*}
\left(  u,u\right)   &  =\frac{\left\langle u,u\right\rangle _{0}}{f^{\prime
}\left(  \left\Vert x\right\Vert \right)  ^{2}}\leq\frac{1}{f^{\prime}\left(
\left\Vert x\right\Vert \right)  ^{2}}\left\langle d\Phi(x)u,d\Phi
(x)u\right\rangle \\
&  \leq C^{2}\frac{\left\langle u,u\right\rangle _{0}}{f^{\prime}\left(
\left\Vert x\right\Vert \right)  ^{2}}=C^{2}\left(  u,u\right)  .
\end{align*}

The inequality (\ref{eq}) is already clear for $u:=\operatorname{grad}%
r_{0}(x)$ by (\ref{um}). Let then $u\in T_{x}S$ with $u\bot\operatorname{grad}%
r_{0}(x).$ It follows from (\ref{exp}) that%
\[
w:=d(\exp^{-1})(x)(u)\bot d(\exp^{-1})(x)\operatorname{grad}r_{0}%
(x)=\lambda\exp^{-1}(x)
\]
for some $\lambda$ and hence $jw\bot j\exp^{-1}(x)$ so that we obtain from
Corollary \ref{six} with $v:=\exp^{-1}(x)$%
\begin{align*}
\left\Vert w\right\Vert \frac{F_{0}\left(  \left\Vert v\right\Vert \right)
}{\left\Vert v\right\Vert }  &  =\left\Vert jw\right\Vert \frac{F_{0}\left(
\left\Vert jv\right\Vert \right)  }{\left\Vert jv\right\Vert }\leq\left\Vert
d\operatorname*{Exp}(jv)jw\right\Vert =\left\Vert d\Phi(x)u\right\Vert \\
&  \leq C\left\Vert jw\right\Vert \frac{F_{0}\left(  \left\Vert jv\right\Vert
\right)  }{\left\Vert jv\right\Vert }=C\left\Vert w\right\Vert _{0}\frac
{F_{0}\left(  \left\Vert v\right\Vert _{0}\right)  }{\left\Vert v\right\Vert
_{0}}.
\end{align*}
But for the rotationally symmetric metric on $S$ we have%
\[
\left\Vert w\right\Vert _{0}\frac{F_{0}\left(  \left\Vert v\right\Vert
_{0}\right)  }{\left\Vert v\right\Vert _{0}}=\left\Vert d\exp(v)w\right\Vert
=\left\Vert u\right\Vert _{0}.
\]

\end{proof}

\section{\label{expan}The expanding minimal discs}

\qquad We remind the reader of the definitions of the functions $k^{+}$ and
$k^{-}$ given in (\ref{kk}) and the assumptions of Theorem \ref{main1},
namely: There is a continuous, non-increasing, strictly negative function
$k_{0}$ such that $k^{+}(x)\leq k_{0}(s),$ $0\leq s<+\infty,${}and
\[
\frac{k^{-}-k_{0}}{\sqrt{-k_{0}}}\in L^{1}\left(  \left[  0,\infty\right)
\right)  .
\]
This will be assumed for the rest of the paper.

\bigskip

Let $B$ be the ball model of $N$ given by Proposition \ref{BM}. Given a
rectifiable Jordan curve $\Gamma\subset\partial_{\infty}B$ let $\Gamma_{1}$ be
the radial projections of $\Gamma$ onto the unit sphere (in the metric of $N)$
centered at $0\in B$ and let $\gamma:=\operatorname*{Exp}^{-1}\left(
\Gamma_{1}\right)  .$ We may assume that $\left\Vert \gamma^{\prime
}\right\Vert =1$ and define the family of Jordan curves $\Gamma_{R}\subset N,$
$1<R<+\infty,$ by $\Gamma_{R}=\operatorname*{Exp}(R\gamma).$ Morrey's
existence theorem $\left[  \cite{Mo1},\text{ }\cite{Mo2}\right]  $ guarantees,
for each $R,$ the existence of a minimizing disc $M_{R}$ with boundary
$\Gamma_{R}$ given by a harmonic, conformal, possibly branched immersion%
\begin{equation}
u_{R}:D\rightarrow N,\text{ }D=\left\{  z\in\mathbb{R}^{2}\text{ }\left\vert
z\right\vert <1\right\}  \label{mbi}%
\end{equation}
where $u_{R}\in C^{2}\left(  D\right)  \cap C^{0}\left(  \overline{D}\right)
$ and $u_{R}|\partial D$ parametrizes $\Gamma_{R}$ one-to-one. We estimate the
area of $M_{R}$ by comparison with the cone $c(s,t)=\operatorname*{Exp}%
(t\gamma(s)),$ $0\leq t\leq R,$ $0\leq s\leq L.$ By direct computation and
Corollary \ref{six}%
\[
\operatorname*{area}(c)=\int_{0}^{R}\int_{0}^{L}\left\Vert
d\operatorname*{Exp}(t\gamma(s))(\gamma^{\prime}(s)\right\Vert \left\Vert
d\operatorname*{Exp}(t\gamma(s))(\gamma(s)\right\Vert dsdt\leq CLG_{0}(R)
\]
with $G_{0}(R)=\int_{0}^{R}F_{0}(t)dt,$ so that%
\begin{equation}
\operatorname*{area}(M_{R})\leq CLG_{0}(R). \label{armr}%
\end{equation}
We now apply the monotonicity formula, Proposition \ref{mon} with
$k(s)=k_{0}(s)$ and obtain%
\begin{equation}
\operatorname*{area}(M_{R}\cap B_{s}(o))\leq CLG_{0}(s). \label{a}%
\end{equation}

Recalling our ball model for $N,$ which we use standardly from now on, we
translate (\ref{a}) into a growth condition for Euclidean balls $B_{r}%
^{e}(0)\subset B$ which have radius $f(r)$ (see Lemma \ref{BR}) in $S$ and as
well in $N$ (Proposition \ref{BM}):%
\begin{equation}
\operatorname*{area}(M_{R}\cap B_{r}^{e})\leq CLG_{0}(f(r)). \label{ae}%
\end{equation}
The next lemma makes the decisive step towards the convergence proof of the
family of surfaces $M_{R}.$

\begin{lemma}
\label{areas}The areas of the family $M_{R}$ with respect to the metric
$\left\langle \text{ , }\right\rangle _{b}$ (see Proposition \ref{BM}) stay
bounded independently of $R.$ Moreover, the Euclidean energies of the
corresponding mappings $u_{R}$ stay bounded as well. There is a radius
$\rho>0$ such that each of the surfaces $M_{R}$ intersects $B_{\rho}(o)\subset
N.$
\end{lemma}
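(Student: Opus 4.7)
First, I would bound the $b$-area of $M_R$. The two metrics on the ball $B$ are related by $\langle\cdot,\cdot\rangle=f'(|x|)^{2}\langle\cdot,\cdot\rangle_{b}$, so $dA_{b}=f'(|x|)^{-2}\,dA_{N}$. Using $f'(|x|)=F_{0}(r(x))/|x|$ from Lemma \ref{BR} together with $|x|\leq 1$, I get $dA_{b}\leq F_{0}(r)^{-2}\,dA_{N}$. I would decompose $M_{R}$ into the $N$-geodesic shells $S_{k}=\{k\leq r<k+1\}$: on $S_{k}$ the integrand is at most $F_{0}(k)^{-2}$, while the estimate \eqref{a} with $s=k+1$ bounds $A_{N}(M_{R}\cap S_{k})$ by $CLG_{0}(k+1)$. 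The hypothesis $k_{0}\leq -a^{2}$ together with Lemma \ref{Fsao}(i) yields $F_{0}'/F_{0}\geq a$, hence both $F_{0}(k)\geq c\,e^{ak}$ (for $k\geq 1$) and $G_{0}(s)\leq F_{0}(s)/a$. Combining, the contribution from $S_{k}$ is bounded by $C'Le^{-ak}$; summing the geometric series gives a bound on $A_{b}(M_{R})$ uniform in $R$.

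The Euclidean energy bound then follows by conformal invariance. Because the $b$-metric differs from the $N$-metric only by the positive factor $f'(|x|)^{-2}$, the $N$-conformal map $u_{R}$ is also conformal for $\langle\cdot,\cdot\rangle_{b}$. For a conformal $2$-dimensional map Dirichlet energy equals area, so $E_{b}(u_{R})=A_{b}(u_{R})$ is bounded by the previous step. Since Proposition \ref{BM} makes the $b$-metric uniformly equivalent to the Euclidean metric, we have $|du_{R}|_{e}^{2}\leq C|du_{R}|_{b}^{2}$ pointwise, so $E_{e}(u_{R})\leq CE_{b}(u_{R})$ is uniformly bounded as well.

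For the uniform intersection with $B_{\rho}(o)$, my plan is to use the three-point conformal normalization that is built into Morrey's construction. Fix three points $z_{1},z_{2},z_{3}\in\partial D$ whose images on $\Gamma_{R}$ are the radial projections to the sphere of $N$-radius $R$ of three fixed, well-separated points on $\Gamma$. The uniform bound on $E_{e}(u_{R})$ together with the classical Courant--Lebesgue lemma yields equicontinuity of $\{u_{R}|_{\partial D}\}$, which interior harmonic-map regularity propagates to equicontinuity of $\{u_{R}\}$ on compact subsets of $\overline D$. Any $C^{0}_{\mathrm{loc}}$-subsequential limit $u_{\infty}$ inherits the three-point condition and is therefore non-constant, forcing $u_{\infty}(0)$ to lie in the Euclidean interior of $B$. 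Consequently $|u_{R}(0)|_{e}$ stays bounded away from $1$, equivalently $r(u_{R}(0))\leq\rho$ for some $\rho<\infty$ independent of $R$, and $u_{R}(0)\in M_{R}\cap B_{\rho}(o)$.

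I expect the third step to be the main obstacle. The first two reduce cleanly to the $N$-area estimate \eqref{a} and the exponential growth of $F_{0}$, but the intersection statement must preclude the discs from sliding off to the asymptotic boundary $\partial B$. Setting up the three-point normalization in the asymptotic regime $R\to\infty$ requires care, since $\Gamma_{R}$ only converges to $\Gamma$ at infinity; one must also rule out that boundary energy concentration at $\partial D$ ruins the extraction of a non-degenerate subsequential limit.
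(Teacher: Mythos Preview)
Your first step contains a real gap. From $F_0'/F_0 \geq a$ you correctly obtain the lower bound $F_0(k) \geq c\,e^{ak}$ and the inequality $G_0 \leq F_0/a$, but these do \emph{not} combine to give a contribution of order $e^{-ak}$ from the shell $S_k$. What you actually need is a bound on $G_0(k+1)/F_0(k)^2$, and this compares $F_0$ at two \emph{different} radii. Nothing in the hypotheses prevents $k_0(s) \to -\infty$ rapidly---indeed, that is exactly the regime the paper targets (see the remark after Theorem~\ref{main1})---and then $F_0(k+1)/F_0(k)$ can be enormous. For example, if $k_0(s) = -e^{2s}$ then a WKB approximation gives $F_0(s) \sim \exp(e^s)$, so
\[
\frac{G_0(k+1)}{F_0(k)^2} \;\gtrsim\; \frac{F_0(k+1)}{e^{k+1}F_0(k)^2} \;\sim\; \exp\!\bigl(e^{k}(e-2)-k-1\bigr) \;\longrightarrow\; \infty,
\]
and your shell sum diverges. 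The paper circumvents this by not decoupling the weight and the area at different radii: it writes $A_b'(r)=f'(r)^{-2}A'(r)$ via the coarea formula and integrates by parts, so that the monotonicity bound $A(r)\leq CLG_0(f(r))$ is paired with the weight at the \emph{same} $r$. The resulting integral $\int (f''/f'^{3})A$ then telescopes cleanly to the estimate \eqref{ten},
\[
A_b\bigl(f^{-1}(R)\bigr)-A_b\bigl(f^{-1}(\rho)\bigr)\;\leq\; 2CLc^{-1}\,\frac{g(\rho)}{F_0(\rho)} .
\]

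This quantitative form of the area bound is also what drives the paper's proof of the intersection statement, and makes your proposed compactness argument unnecessary. Inequality \eqref{ten} says precisely that if $M_R$ misses $B_\rho(o)$ then its $b$-area---and hence its Euclidean energy---is at most $2CLc^{-1}g(\rho)/F_0(\rho)$, which tends to $0$ as $\rho\to\infty$. Since any disc spanning $\Gamma_R$ has Euclidean area bounded below (the $\Gamma_R$ converge to the nontrivial Jordan curve $\Gamma\subset\partial B$), this is a contradiction for large $\rho$. No three-point normalization or equicontinuity extraction is needed here; those enter only later, in Lemma~\ref{main}, after the normalization \eqref{treze} has been fixed. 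Your step~2 is correct and matches the paper.
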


\begin{proof}
By Proposition \ref{BM} the surface area elements $d\omega_{b}$ in the metric
$\left\langle \text{ , }\right\rangle _{b}$ and $d\omega$ in $N$ stand in the
relation
\[
d\omega=f^{\prime}\left(  \left\vert x\right\vert \right)  ^{2}d\omega_{b}.
\]
If therefore we define%
\[
\operatorname*{A}(r)=\operatorname*{area}(M_{R}\cap B_{r}^{e})
\]
in $N$ and
\[
\operatorname*{A}\nolimits_{b}(r)=\operatorname*{area}(M_{R}\cap B_{r}^{e})
\]
in the $\left\langle \text{ , }\right\rangle _{b}$- metric, it follows from
the coarea formula $\left[  \cite{Fe},\text{ 3.2.12}\right]  $ that
\[
\frac{d}{dr}\operatorname*{A}\nolimits_{b}(r)=f^{\prime}(r)^{-2}\frac{d}%
{dr}\operatorname*{A}(r)
\]
from what we get by integration%
\begin{align}
\operatorname*{A}\nolimits_{b}(f^{-1}(R))-\operatorname*{A}\nolimits_{b}%
(f^{-1}(\rho))  &  =\int_{f^{-1}(\rho)}^{f^{-1}(R)}f^{\prime}(r)^{-2}%
\operatorname*{A}\nolimits^{\prime}(r)dr\nonumber\\
&  =f^{\prime}(f^{-1}(R))^{-2}\operatorname*{A}(f^{-1}(R))-f^{\prime}%
(f^{-1}(\rho))^{-2}\operatorname*{A}(f^{-1}(\rho))\nonumber\\
&  +2\int_{f^{-1}(\rho)}^{f^{-1}(R)}\frac{f^{\prime\prime}(r)}{f^{\prime
}(r)^{3}}\operatorname*{A}(r)dr. \label{set}%
\end{align}
From Lemma \ref{BR} we recall the following relations%
\begin{align*}
f^{\prime}  &  =\frac{1}{g^{\prime}\circ f}=\frac{F_{0}\circ f}{g\circ f}\\
f^{\prime\prime}  &  =\frac{F_{0}^{\prime}g-F_{0}g^{\prime}}{g^{2}}\circ
ff^{\prime}=\frac{\left(  F_{0}^{\prime}-1\right)  F_{0}}{g^{2}}\circ f.
\end{align*}
Since $F_{0}^{\prime}(0)=1$ and $F_{0}^{\prime\prime}\geq0$ we see that
\begin{equation}
f^{\prime\prime}\geq0. \label{f2}%
\end{equation}
Lemma \ref{Fsao} (i) implies $F_{0}^{\prime}\geq cF_{0}$ for some constant
$c>0$ from what we get%
\begin{equation}
G_{0}\leq c^{-1}F_{0}. \label{goc}%
\end{equation}
By means of (\ref{ae}), (\ref{f2}) and (\ref{goc}) we may continue the
estimate (\ref{set}):%
\begin{align}
\operatorname*{A}\nolimits_{b}(f^{-1}(R))-\operatorname*{A}\nolimits_{b}%
(f^{-1}(\rho))  &  \leq CL\left(  \frac{g(R)^{2}}{F_{0}(R)^{2}}G_{0}(R)\right.
\nonumber\\
&  \left.  +2\int_{f^{-1}(\rho)}^{f^{-1}(R)}\frac{f^{\prime\prime}%
(r)}{f^{\prime}(r)^{2}}\frac{g(f(r))G_{0}(f(r))}{F_{0}(f(r))}dr\right)
\nonumber\\
&  \leq CLc^{-1}\left(  \frac{g(R)}{F_{0}(R)}+2\int_{f^{-1}(\rho)}^{f^{-1}%
(R)}\left(  \frac{-1}{f^{\prime}(r)}\right)  ^{\prime}dr\right) \nonumber\\
&  \leq2CLc^{-1}\frac{g(\rho)}{F_{0}(\rho)} \label{ten}%
\end{align}
for arbitrary $\rho\in\left(  0,R\right)  .$ Because of $0\leq g\leq1,$
$g(0)=0$ and $g^{\prime}(0)>0$ as we showed in Lemma \ref{BR}, we see that the
areas of the surfaces $M_{R}$ in the $\left\langle \text{ , }\right\rangle
_{b}-$metric stay bounded independently of $R.$ The maps $u_{R}:D\rightarrow
N$ being conformal in the metric of $N$ are conformal with respect to the
$\left\langle \text{ , }\right\rangle _{b}-$metric as well since this metric
differs from the one of $N$ by a conformal factor. But then it follows that
the energies of the mappings $u_{R}$ in the $\left\langle \text{ ,
}\right\rangle _{b}-$metric and, on account of Proposition \ref{BM}, as well
in the Euclidean metric are bounded independently of $R.$ In other words the
mapping $u_{R},$ considered as mappings into $\mathbb{R}^{n}$ are bounded in
the norm of the Sobolev space $H_{2}^{1}.$

Let us now assume that $M_{R}$ omits the ball $B_{\rho}(o)$ of $N.$ Then one
sees from (\ref{ten}) that the area of $M_{R}$ in the $\left\langle \text{ ,
}\right\rangle _{b}-$metric and hence the Euclidean energy of $u_{R}$ become
arbitrarily small if $R$ and $\rho$ are sufficiently large. This however
contradicts the fact that $u_{R}|_{\partial D}$ parametrizes a rectifiable
Jordan curve $\Gamma_{R}\subset B$ and $\Gamma_{R}$ converges to a rectifiable
curve $\Gamma\subset\partial_{\infty}B$ as $R\rightarrow\infty.$ Here we used
the fact that in the ball model of $N$ geodesic cones with center $0$ are
straight Euclidean cones. This shows that there is a ball $B_{\rho}(o)\subset
N$ such that $B_{\rho}(o)\cap M_{R}\neq\varnothing$ for all $R\geq1.$
\end{proof}

\bigskip

In the next lemma we prove local energy and local $C^{0}-$estimates for
conformal harmonic maps $u:D\rightarrow N.$

\begin{lemma}
\label{c0}Let $u:D\rightarrow N$ be harmonic and conformal.

(i) For any subset $D_{0}$ with $\overline{D}_{0}\subset D$ holds%
\[
E(u,D_{0}):=\frac{1}{2}\int_{D_{0}}\left\Vert du\right\Vert ^{2}dx\leq
8a^{-2}\operatorname*{cap}(D_{0},D)
\]
with%
\[
\operatorname*{cap}(D_{0},D)=\inf\left\{  \frac{1}{2}\int_{D_{0}}\left\vert
d\eta\right\vert ^{2}dx\text{
$\vert$
}\eta\in C_{0}^{\infty}\left(  D\right)  ,\text{ }\eta=1\text{ on }%
D_{0}\right\}
\]

(ii) For any $z_{0}\in D$ and $s<\left(  1-\left\vert z_{0}\right\vert
\right)  ^{2}$ we have the estimate%
\[
\operatorname*{dist}\left(  u(z),u(z_{0})\right)  \leq\left(  \sqrt{\frac
{8}{\pi}}+4\sqrt{\frac{\pi}{-\ln s}}\right)  a^{-2}\operatorname*{cap}%
(D_{\sqrt{s}}(z_{0}),D)^{\frac{1}{2}}%
\]
for $z\in D_{s}(z_{0}):=\left\{  z\in D\text{
$\vert$
}\left\vert z-z_{0}\right\vert <s\right\}  .$
\end{lemma}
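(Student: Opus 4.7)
The plan is to prove (i) by a Hessian-comparison/integration-by-parts argument that crucially exploits the conformality of $u$, and then deduce (ii) from (i) in two complementary ways, one using the monotonicity formula of Proposition~\ref{mon} for area-distance control near a chosen point, the other using a Courant--Lebesgue oscillation estimate combined with the subharmonicity of the distance function in the Hadamard target.

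For (i), I would fix an arbitrary point $p\in N$ and write $r(\cdot):=\operatorname{dist}(\cdot,p)$. The standing hypothesis $k^{+}\leq k_{0}\leq -a^{2}$, combined with Lemma~\ref{hes1} applied to the constant comparison curvature $k\equiv -a^{2}$ (for which $F'/F\equiv a$ by Lemma~\ref{Fsao}(i)), yields the pointwise Hessian estimate
\[
\operatorname{Hess}_{N}r(X,X)\;\geq\;a\bigl(|X|^{2}-\langle X,\operatorname{grad} r\rangle^{2}\bigr)
\]
for every $X\in T_{q}N$, $q\neq p$. Composing with the harmonic map $u$ (so that the tension field vanishes) gives
\[
\Delta(r\circ u)\;\geq\;a\bigl(e(u)-T\bigr),\qquad T:=\sum_{i=1}^{2}\langle u_{x_{i}},\operatorname{grad} r\rangle^{2}\;=\;|\nabla(r\circ u)|^{2},
\]
where $e(u)=|u_{x}|^{2}+|u_{y}|^{2}$. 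The conformality of $u$ enters decisively here: since $u_{x},u_{y}$ are orthogonal in $T_{u(z)}N$ with common length $\sqrt{e(u)/2}$, one has $T=\tfrac{1}{2}e(u)\,|P\operatorname{grad} r|^{2}$, where $P$ is the orthogonal projection onto the plane $du(T_{z}D)$. Because $|\operatorname{grad} r|=1$ this forces $T\leq\tfrac{1}{2}e(u)$, and consequently
\[
\Delta(r\circ u)\;\geq\;\tfrac{a}{2}\,e(u),\qquad |\nabla(r\circ u)|^{2}\;\leq\;\tfrac{1}{2}e(u).
\]
Now pick $\eta\in C_{0}^{\infty}(D)$ with $\eta\equiv 1$ on $D_{0}$; multiplying the first display by $\eta^{2}$, integrating by parts, and applying Cauchy--Schwarz together with the gradient bound yields
\[
\tfrac{a}{2}\int_{D}\eta^{2}e(u)\,dx\;\leq\;\sqrt{2}\Bigl(\int_{D}\eta^{2}e(u)\,dx\Bigr)^{1/2}\Bigl(\int_{D}|\nabla\eta|^{2}\,dx\Bigr)^{1/2}.
\]
Solving this quadratic-type inequality and passing to the infimum over admissible $\eta$ gives $E(u,D_{0})\leq 8a^{-2}\operatorname{cap}(D_{0},D)$. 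The mild non-smoothness of $r$ at $p$ is no genuine issue: one either argues distributionally, since $r$ is $1$-Lipschitz, or replaces $r$ by $\sqrt{r^{2}+\varepsilon^{2}}$ and lets $\varepsilon\to 0$.

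For (ii), the first term $\sqrt{8/\pi}\,a^{-2}\operatorname{cap}^{1/2}$ arises from the monotonicity formula: the image $u(D_{\sqrt{s}}(z_{0}))$ is a minimal surface passing through $u(z_{0})$ with total area bounded, by (i), in terms of $\operatorname{cap}(D_{\sqrt{s}}(z_{0}),D)$; Proposition~\ref{mon} then forces the extrinsic distance from $u(z_{0})$ to any other point of the image to be controlled by $\sqrt{E/\pi}$. The second term comes from a Courant--Lebesgue argument: by (i) the annular energy on $\{s<|z-z_{0}|<\sqrt{s}\}$ is controlled, and a logarithmic mean-value argument in polar coordinates around $z_{0}$ produces a radius $\rho\in(s,\sqrt{s})$ with $\int_{0}^{2\pi}|u_{\theta}(\rho,\theta)|^{2}\,d\theta\leq 2E/\log(1/s)$, whence a Wirtinger-type inequality bounds $\operatorname{osc}_{\partial D_{\rho}(z_{0})}(u)$ by a multiple of $\sqrt{E/|\log s|}$. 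Since $\operatorname{dist}(u(\cdot),q^{*})$ is subharmonic on $D_{\rho}(z_{0})$ for every $q^{*}\in N$ (because $N$ has nonpositive curvature and $u$ is harmonic), the maximum principle applied with $q^{*}\in\partial D_{\rho}(z_{0})$, together with the triangle inequality, gives $\operatorname{dist}(u(z),u(z_{0}))\leq 2\operatorname{osc}_{\partial D_{\rho}(z_{0})}(u)$ for every $z\in D_{s}(z_{0})\subset D_{\rho}(z_{0})$. Summing the two contributions produces the stated bound.

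The main technical hurdle lies in (i): the factor $\tfrac{1}{2}$ in $T\leq\tfrac{1}{2}e(u)$, which is exactly the conformality gain, is what allows the absorption step to close. Without conformality the Hessian comparison alone only yields $\Delta(r\circ u)\geq a(e(u)-T)$ with $T$ possibly as large as $e(u)$, and the whole scheme collapses; so the delicate point is to verify that this geometric inequality is both sharp enough and compatible with the $H^{1,2}$-estimates needed later in the energy-concentration argument.
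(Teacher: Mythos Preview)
Your proof of (i) is correct and follows the same strategy as the paper: Hessian comparison for the distance function, the conformality inequality $T\le\tfrac12\,e(u)$ (equivalently the paper's $\|du^{rad}\|^{2}\le\|du^{spher}\|^{2}$), integration by parts against a cutoff, and an absorption step. The paper works with $w=G\circ r\circ u$ and the test function $\eta^{2}/(F\circ r\circ u)$ rather than with $r\circ u$ and $\eta^{2}$ directly, but after the cancellation $G'=F$ the two computations coincide; your version is slightly more streamlined. One small correction: the standing hypotheses only control the \emph{radial} sectional curvatures from the fixed base point $o$, so Lemma~\ref{hes1} is available for $p=o$, not for an ``arbitrary $p$''. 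This is harmless here since your argument never actually uses the freedom in $p$.

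For (ii) you name the same three ingredients as the paper (monotonicity, Courant--Lebesgue, maximum principle), but your description of the monotonicity step is not right. Proposition~\ref{mon} does \emph{not} bound the distance from $u(z_{0})$ to ``any other point of the image''; it only says that if the boundary of the minimal piece stays outside $B_{\delta}(u(z_{0}))$ then the area is at least $\pi\delta^{2}$, hence it bounds
\[
\delta(r):=\inf_{z\in\partial D_{r}(z_{0})}d\bigl(u(z),u(z_{0})\bigr)\le\sqrt{8/\pi}\,a^{-1}\operatorname*{cap}(D_{r}(z_{0}),D)^{1/2}.
\]
The paper then picks a Courant--Lebesgue radius $r\in(s,\sqrt{s})$, adds the length (equivalently, the oscillation) bound on $u|\partial D_{r}(z_{0})$ to $\delta(r)$ to control $\sup_{\partial D_{r}}d(u(\cdot),u(z_{0}))$, and invokes the maximum principle for harmonic maps into nonpositively curved targets to pass to the interior. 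Your alternative route via subharmonicity of $d(u(\cdot),q^{*})$ and the triangle inequality is also valid and would in fact give a bound from the Courant--Lebesgue term alone (with a somewhat larger constant), but the monotonicity contribution as you phrased it needs the correction above.
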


\begin{proof}
(i) We set $k=k^{+}$ in Lemma \ref{Fsao} and Lemma \ref{hes1} and consider the
function%
\[
w:=G\circ r\circ u,\text{ }r(x)=\operatorname*{dist}(x,o).
\]
Using the harmonicity of $u$ we obtain from the Corollary \ref{hes2}%
\[
\Delta w=\sum_{k=1,2}\left(  \operatorname*{Hess}G\circ r\right)  (u)\left(
\frac{\partial u}{\partial x_{k}},\frac{\partial u}{\partial x_{k}}\right)
\geq F^{\prime}(r(u))\left\Vert du\right\Vert ^{2}.
\]
We test this inequality with the function $\varphi=\eta^{2}/F\circ r\circ u$
where $\eta\in C_{0}^{\infty}\left(  D\right)  ,$ $\eta=1$ on $D_{0}$ to
obtain%
\begin{align*}
0  &  \geq\int\left(  \sum_{k=1,2}\frac{\partial w}{\partial x_{k}}%
\frac{\partial\varphi}{\partial x_{k}}+F^{\prime}\left\Vert du\right\Vert
^{2}\varphi\right)  dx\\
&  =\int\left(  -\eta^{2}\frac{G^{\prime}F^{\prime}}{F^{2}}\sum_{k=1,2}%
\left\langle \operatorname{grad}r,\frac{\partial u}{\partial x_{k}%
}\right\rangle ^{2}+\eta^{2}\frac{F^{\prime}}{F}\left\Vert du\right\Vert
^{2}\right. \\
&  \left.  +2\eta\frac{G^{\prime}}{F}\sum_{k=1,2}\frac{\partial\eta}{\partial
x_{k}}\left\langle \operatorname{grad}r,\frac{\partial u}{\partial x_{k}%
}\right\rangle \right)  dx.
\end{align*}
Since $G^{\prime}=F$ this simplifies to%
\begin{equation}%
\begin{array}
[c]{c}%
\int\left(  \eta^{2}\frac{F^{\prime}}{F}\sum_{k=1,2}\left(  \left\Vert
\frac{\partial u}{\partial x_{k}}\right\Vert ^{2}-\left\langle
\operatorname{grad}r,\frac{\partial u}{\partial x_{k}}\right\rangle
^{2}\right)  dx\right. \\
\leq2\int\eta\left\vert d\eta\right\vert \left(  \sum_{k=1,2}\left\langle
\operatorname{grad}r,\frac{\partial u}{\partial x_{k}}\right\rangle
^{2}\right)  ^{\frac{1}{2}}dx.
\end{array}
\label{simp}%
\end{equation}
Let now $x\in D$ be arbitrary and $\left(  e_{1},...,e_{n}\right)  $ be an
orthonormal basis at $u(x)$ with $e_{1}=\operatorname{grad}r.$ The
conformality relations then read%
\begin{align*}
0  &  =\left\langle \frac{\partial u}{\partial x_{1}},\frac{\partial
u}{\partial x_{2}}\right\rangle =\sum_{j=1}^{n}\left\langle e_{j}%
,\frac{\partial u}{\partial x_{1}}\right\rangle \left\langle e_{j}%
,\frac{\partial u}{\partial x_{2}}\right\rangle ,\\
0  &  =\left\Vert \frac{\partial u}{\partial x_{1}}\right\Vert ^{2}-\left\Vert
\frac{\partial u}{\partial x_{2}}\right\Vert ^{2}=\sum_{j=1}^{n}\left(
\left\langle e_{j},\frac{\partial u}{\partial x_{1}}\right\rangle
^{2}-\left\langle e_{j},\frac{\partial u}{\partial x_{2}}\right\rangle
^{2}\right)
\end{align*}
which, in complex notation with $i=\sqrt{-1},$ may be rewritten in the form%
\[
\sum_{j=1}^{n}\left(  \left\langle e_{j},\frac{\partial u}{\partial x_{1}%
}\right\rangle +i\left\langle e_{j},\frac{\partial u}{\partial x_{2}%
}\right\rangle \right)  ^{2}=0.
\]
Separating the term $j=1$ in this sum allows to estimate the radial component
$du^{rad}$ of $du$ by the spherical component $du^{spher},$ $i.e.$%
\begin{equation}
\left\Vert du^{rad}\right\Vert ^{2}\leq\left\Vert du^{spher}\right\Vert ^{2}.
\label{d}%
\end{equation}
We now employ Lemma \ref{Fsao} (i) and (\ref{d}) to obtain from (\ref{simp})%
\begin{align*}
&  a\int\eta^{2}\left\Vert du^{spher}\right\Vert ^{2}dx\\
&  \leq\left(  \varepsilon\int\eta^{2}\left\Vert du^{spher}\right\Vert
^{2}dx+\frac{1}{\varepsilon}\int\left\vert d\eta\right\vert ^{2}dx\right)
\end{align*}
for arbitrary $\varepsilon>0.$ Choosing $\varepsilon=\frac{1}{2}a$ yields%
\[
\int\eta^{2}\left\Vert du^{spher}\right\Vert ^{2}dx\leq4a^{-2}\int\left\vert
d\eta\right\vert ^{2}dx.
\]
Using (\ref{d}) once more proves the statement.

(ii) For any $r<1-\left\vert z_{0}\right\vert $ the image of $u|D_{r}(z_{0})$
contains a minimal surface which passes through $u(z_{0})$ and has no boundary
inside the geodesic ball centered at $u(z_{0})$ and of radius%
\[
\delta(r):=\inf\left\{  d(u(z),u(z_{0}))\text{
$\vert$
}z\in\partial D_{r}(z_{0})\right\}  .
\]
The classical monotonicity formula for non positively curved metrics gives the
estimate%
\[
\operatorname*{area}\left(  u|D_{r}(z_{0})\right)  \geq\pi\delta(r)^{2}.
\]
Part (i) then provides%
\begin{equation}
\delta(r)\leq\sqrt{\frac{8}{\pi}}a^{-1}\operatorname*{cap}(D_{r}%
(z_{0}),D)^{\frac{1}{2}}. \label{del}%
\end{equation}

Given now $s\in\left(  0,(1-\left\vert z_{0}\right\vert )^{2}\right)  $ the
Lemma of Courant-Lebesgue [\cite{DHS}, 4.4] guarantees the existence of a
radius $r\in\left(  s,\sqrt{s}\right)  $ such that the length of the curve
$u|\partial D_{r}(z_{0})$ is estimated as follows:%
\begin{align*}
L\left(  u|\partial D_{r}(z_{0})\right)   &  \leq\sqrt{\frac{8\pi}{-\ln s}%
}E\left(  u|D_{\sqrt{s}}(z_{0})\right)  ^{\frac{1}{2}}\\
&  \leq8\sqrt{\frac{\pi}{-\ln s}}a^{-1}\operatorname*{cap}(D_{\sqrt{s}}%
(z_{0}),D)^{\frac{1}{2}}.
\end{align*}
Combining this with (\ref{del}) we arrive at%
\begin{align*}
&  \sup\left\{  \operatorname*{dist}(u(z),u(z_{0})\text{
$\vert$
}z\in\partial D_{r}(z_{0})\right\} \\
&  \leq\left(  \sqrt{\frac{8}{\pi}}+4\sqrt{\frac{\pi}{-\ln s}}\right)
a^{-1}\operatorname*{cap}(D_{\sqrt{s}}(z_{0}),D)^{\frac{1}{2}}.
\end{align*}
The maximum principle for harmonic maps into non-positively curved spaces
\cite{JK} yields the statement in (ii).
\end{proof}

\bigskip

After a suitable conformal reparametrization of $u_{R}:D\rightarrow N,$ we may
assume that the following important normalization holds%
\begin{equation}
u_{R}(0)\in B_{\rho}(o)\subset N,\text{ }0<R<+\infty, \label{treze}%
\end{equation}
where $\rho$ is given by Lemma \ref{areas}.

We are now in position to prove:

\begin{lemma}
\label{main}

(i) For some sequence $R_{k}\rightarrow+\infty$ the sequence of conformal
harmonic maps $\left(  u_{R_{k}}\right)  $ converges locally in $C^{2}$ to a
proper, conformal harmonic map $u:D\rightarrow N.$ If $n=3$ then $u$ is an embedding.

(ii) Considered as a map into $\mathbb{R}^{n},$ $u$ belongs to the Sobolev
space $H_{2}^{1}\left(  D,\mathbb{R}^{n}\right)  $ and its trace $u|\partial
D$ either is a continuous weakly monotonic parametrization of $\Gamma
=\lim_{R\rightarrow\infty}\Gamma_{R}\subset\partial B$ or $u|\partial D$
equals one point of $\Gamma.$
\end{lemma}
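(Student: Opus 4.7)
The uniform $H_2^1(D,\mathbb{R}^n)$ bound of Lemma \ref{areas}, combined with the normalization $u_R(0)\in\overline{B_\rho(o)}$ in (\ref{treze}) and the local modulus-of-continuity estimate of Lemma \ref{c0}(ii)---applied with the constant $a>0$ furnished by $k_0\le -a^2$---yields locally uniform $C^0$ bounds for the $u_R$ as maps into $N$ on every compact subset of $D$. Since the images then lie in a fixed compact subset of $N$, standard elliptic regularity for harmonic maps (in normal coordinates of $N$) upgrades this to locally uniform $C^{2,\alpha}$ bounds. Arzel\`a--Ascoli and a diagonal extraction produce a subsequence $u_{R_k}\to u$ in $C^2_{\rm loc}(D,N)$; the limit is harmonic, conformal, and, being the weak $H_2^1$ limit of the subsequence, an element of $H_2^1(D,\mathbb{R}^n)$. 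The monotonicity formula of Proposition \ref{mon} applied to each $u_{R_k}$ together with $u_{R_k}(0)\in\overline{B_\rho(o)}$ forces a positive lower bound for the $N$-area of $u_{R_k}$ in $B_{\rho+1}(o)$ that passes to the limit, so $u$ is nonconstant.

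\textbf{Step 2 (Properness via blow-up).} Properness of $u\colon D\to N$ amounts, in the ball model, to $|u(z)|\to 1$ as $|z|\to 1$. The potential failure mode is energy concentration at some boundary point $z^\ast\in\partial D$: suppose for contradiction that a sequence $z_n\to z^\ast$ satisfies $u(z_n)\in B_{\rho_1}(o)$. Using $C^2_{\rm loc}$-convergence and the fact that $\Gamma_{R_k}$ lies eventually outside any fixed ball $B_{\rho_1+2}(o)$, I would identify a component $U_n\subset D$ of $u_{R_{k(n)}}^{-1}(B_{\rho_1+2}(o))$ containing $z_n$ whose interior boundary is mapped to $\partial B_{\rho_1+2}(o)$. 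Proposition \ref{mon}, applied to the minimal surface $u_{R_{k(n)}}|_{U_n}$ centered at $u_{R_{k(n)}}(z_n)$, gives a positive lower bound for its $N$-area, and hence (since $f'$ is bounded on the compact set $\overline{B_{\rho_1+2}(o)}$) for its $b$-area, uniformly in $n$. Blowing up $u_{R_{k(n)}}$ around $z^\ast$ at the rate at which the concentration region shrinks yields a nontrivial conformal harmonic limit with image trapped in a compact subset of $N$. This contradicts the area-decay estimate (\ref{ten}) together with the overall $b$-area bound from Lemma \ref{areas}, because choosing $\rho$ large enough in (\ref{ten}) makes the $b$-area of each $u_{R_{k(n)}}$ outside $B_\rho(o)$ arbitrarily small. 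This rules out concentration and proves properness.

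\textbf{Step 3 (Embedding when $n=3$ and the boundary trace).} When $n=3$, each $u_{R_k}$ is an embedding by the Meeks--Yau theorem (a least-area disc in a Hadamard 3-manifold bounded by a Jordan curve lying on the convex geodesic sphere $\partial B_{R_k}(o)$ is embedded), and embeddedness is preserved under the $C^2_{\rm loc}$ limit since any interior self-intersection of a minimal surface is excluded by the maximum principle. For part (ii), the weak $H_2^1$ convergence $u_{R_k}\rightharpoonup u$ together with the compact trace embedding $H^{1/2}(\partial D)\hookrightarrow L^2(\partial D)$ gives strong $L^2$ convergence of the boundary traces. Since each $u_{R_k}|_{\partial D}$ monotonically parametrizes $\Gamma_{R_k}=\operatorname{Exp}(R_k\gamma)$ and $\Gamma_{R_k}\to\Gamma\subset\partial B$ uniformly in the ball model, the $L^2$-limit takes values in $\Gamma$. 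Absent a three-point boundary normalization, the conformal reparametrization freedom may make the parametrizations concentrate; the classical Courant--Lebesgue argument (cf.\ \cite{DHS}) then produces exactly the stated dichotomy: either the trace is a continuous weakly monotonic parametrization of $\Gamma$, or it collapses to a single point of $\Gamma$.

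\textbf{Main obstacle.} The hardest step is the properness in Step~2. Without a convex barrier at infinity the usual maximum-principle or barrier argument is unavailable, and one must convert the area-decay estimate (\ref{ten})---which says there is no $b$-area concentration at infinity for the approximating surfaces---into a quantitative obstruction ruling out a ``trapped interior end'' of the limit~$u$. The blow-up argument, which pits a hypothetical concentration region against the density lower bound coming from Proposition \ref{mon}, is the technical heart of the proof.
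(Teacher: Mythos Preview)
Your Step~1 and Step~3 track the paper fairly closely (the paper invokes Morrey's H\"older estimate for energy minimizers before bootstrapping to $C^{2,\alpha}$, and it uses Helly's selection theorem to extract a pointwise limit of the monotone reparametrizations $\varphi_R$, but the overall logic is the same). The real divergence is in Step~2, and there your argument has a genuine gap.

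First, the order matters. The paper establishes the boundary trace statement \emph{before} properness: once one knows $u|\partial D\subset\partial_\infty B$ (in either alternative of part~(ii)), properness is proved for the limit map $u$ directly, with no blow-up and no reference to the approximating $u_{R_k}$. Concretely, the paper invokes a result from \cite{DHT} to find radii $r_k\nearrow 1$ with $u(\partial D_{r_k}(0))\subset B\setminus B_{2R}(o)$; then if some annulus $A_k=\{r_k<|z|<r_{k+1}\}$ contained a point mapped into $B_R(o)$, the classical monotonicity formula would force $\operatorname{area}^N(u(A_k)\cap B_{2R}(o))\geq \pi R^2$. Since $u\in H^1_2$ has finite area in any fixed ball, only finitely many annuli can do this, and properness follows.

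Your blow-up argument, by contrast, does not close. The components $U_n\subset u_{R_{k(n)}}^{-1}(B_{\rho_1+2}(o))$ need not shrink to $z^\ast$: the normalization $u_{R_k}(0)\in B_\rho(o)$ means $0$ itself lies in the same preimage, so $U_n$ may well be the component containing the origin and have diameter of order one. Even granting a shrinking region, the contradiction you claim via (\ref{ten}) is not available: (\ref{ten}) controls the $b$-area of $M_R$ \emph{outside} a large ball $B_\rho(o)$, whereas your hypothetical concentration region is, by construction, mapped \emph{into} the fixed compact set $B_{\rho_1+2}(o)$. Nothing in (\ref{ten}) bounds that contribution from above by something small. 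Finally, the blow-up procedure in the paper is not used for properness at all; it appears in Section~\ref{Bu} (proof of Theorem~\ref{main1}) to exclude the constant-trace alternative of Lemma~\ref{main}(ii), which is a separate issue from the one you are addressing here.

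In summary: reverse the order of Steps~2 and~3, and replace the blow-up in Step~2 by the annulus argument on the limit map $u$ using the fact that its Sobolev trace already lies on $\partial B$.
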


\begin{proof}
We shall obtain $u$ as a limit of a subsequence of the sequence $u_{R}%
:D\rightarrow N$ given by (\ref{mbi}). Due the normalization condition
(\ref{treze}) and by Lemma \ref{c0}, for each subdisc $D_{r}\left(  0\right)
\subset D$ with $r<1$ all the maps $u_{R}$ map $D_{r}(0)$ into some fixed ball
$B_{s(r)}(o)\subset N$ and the energies of $u_{R}|D_{r}(0)$ are uniformly
bounded as well. This makes Morrey's H\"{o}lder estimate for energy minimizing
maps applicable \cite{Mo2} so that we get an uniform $C^{\alpha}-$bound for
$u_{R}$ on each subdisc $D_{r}(0)$ for some $\alpha(r)\in\left(  0,1\right)
.$ By well known regularity estimates for harmonic maps this implies uniform
local $C^{2,\alpha}$ bounds for the family $u_{R}.$ Therefore we may find a
sequence $R_{k}\rightarrow\infty$ such that $u_{R_{k}}$ converges locally in
$C^{2}$ to a conformal harmonic map $u:D\rightarrow N.$ On the other hand,
considering the $u_{R}$ as maps into $\mathbb{R}^{n},$ we know from Lemma
\ref{areas} that the $u_{R}$ are uniformly bounded in the $H_{2}%
^{1}(D,\mathbb{R}^{n})-$norm, so that we may also assume that $u_{R_{k}}$
converges to $u$ weakly in $H_{2}^{1}(D,\mathbb{R}^{n}).$ The trace operator
$H_{2}^{1}(D,\mathbb{R}^{n})\rightarrow L_{2}(\partial D,\mathbb{R}^{n})$
being compact we may then furthermore assume that $u_{R_{k}}|\partial
D\rightarrow u|\partial D$ ($k\rightarrow+\infty)$ in $L_{2}(\partial
D,\mathbb{R}^{n}).$

Let us now choose parametrizations $\gamma_{R}:\left[  0,2\pi\right]
\rightarrow\Gamma_{R},$ $\gamma:\left[  0,2\pi\right]  \rightarrow\Gamma$
which are proportional to Euclidean arclength such that $\gamma_{R}%
\rightarrow\gamma$ uniformly in the Euclidean metric as $R\rightarrow\infty.$
We extend $\gamma_{R}$ and $\gamma$ as periodic functions defined on
$\mathbb{R}$. Then we may write%
\begin{equation}
u_{R}(e^{i\theta})=\gamma_{R}(\varphi_{R}(\theta)),\text{ }0\leq\theta\leq
2\pi, \label{ei}%
\end{equation}
with some monotonic function $\varphi_{R},$ $0\leq\varphi_{R}(0)\leq2\pi,$
$\varphi_{R}(2\pi)-\varphi_{R}(0)=2\pi.$

A classical theorem of Helly says that any sequence of monotone, uniformly
bounded functions has a pointwise convergent subsequence, so that we may also
assume that
\[
u_{R_{k}}(e^{i\theta})\rightarrow\gamma(\varphi(\theta))\text{ (}%
k\rightarrow+\infty)
\]
for some monotone function $\varphi$ with $\varphi(2\pi)-\varphi(0)=2\pi.$
Together with the $L_{2}$ convergence $u_{R_{k}}|\partial D\rightarrow
u|\partial D$ this clearly implies that
\[
u(e^{i\theta})=\gamma(\varphi(\theta)),\text{ }0\leq\theta\leq2\pi.
\]
This shows that $u|\partial D$ could only have jump discontinuities; however
these are not possible for boundary values of an $H_{2}^{1}-$function thanks
to the lemma of Courant-Lebesgue [\cite{DHS}, 4.4]. It follows that $\varphi$
cannot have jumps of height less than $2\pi$ and we arrive at the alternative
that either $\varphi$ is continuous or it makes a jump of $2\pi,$ in other
words, either $u(\partial D)=\Gamma$ or $u|\partial D$ is constant.

As next, let us show that the limit map $u$ is proper. From what we already
showed above we know that $u(\partial D)\subset\partial_{\infty}B$ which, in
the metric of $N,$ means that the Sobolev trace $u(\partial D)$ is infinitely
far away. Let a ball $B_{R}(o)$ be given with arbitrarily large $R.$ Since
$\overline{B_{2R}(o)}\subset N$ is a compact subset of $B$ and $u(\partial
D)\subset\partial_{\infty}B$ we can apply Theorem 1 in \cite{DHT} to find a
sequence of radii $r_{k}\rightarrow1$ ($k\rightarrow+\infty),$ $r_{k}%
<r_{k+1}<1,$ such that $u\left(  \partial D_{r_{k}}(0)\right)  \subset
B\backslash B_{2R}(o).$ Let us set%
\[
A_{k}:=\left\{  z\in D\text{
$\vert$
}r_{k}<\left\vert z\right\vert <r_{k+1}\right\}
\]
and let us assume that some $A_{k}$ contains points $z_{k}$ with $u(z_{k})\in
B_{R}(o)\subset N.$ Since $u(\partial A_{k})$ is outside of $B_{2R}(o),$
$u(A_{k})\cap B_{2R}(o)$ contains a minimal surface which passes through
$u\left(  z_{k}\right)  \in B_{R}(o)$ and has no boundary inside $B_{2R}(o)$
so that the monotonicity formula gives
\begin{equation}
\operatorname*{area}\left(  u(A_{k})\cap B_{2R}(o)\right)  \geq\pi R^{2}.
\label{ak}%
\end{equation}
Since $u\in H_{2}^{1}\left(  D,\mathbb{R}^{n}\right)  $ the area of $u$ inside
$B_{2R}(o)\subset N$ is finite so that (\ref{ak}) can hold only for finitely
many $k$ and hence $d(u(z),o)\geq R$ for $\left\vert z\right\vert \geq
r_{k_{0}}$ for some $k_{0},$ showing that $u:D\rightarrow N$ is proper.

Let us finally consider the case $n=3.$ Since the the boundary curves of the
surfaces $u_{R}$ are contained in the metric spheres of $N$ and the spheres
are convex, it follows from the results in \cite{G}, \cite{MY} that $u_{R}$ is
an embedding. Then, as a limit of minimal embeddings, $u$ is an embedding,
too. This concludes the proof of the Lemma.
\end{proof}

\section{\label{Bu}The blowing up procedure and proof of the Theorem
\ref{main1}}

\qquad The concentration phenomenon which comes up as a possibility in the
limiting process in Lemma \ref{main} and the resulting splitting off of a
punctured minimal sphere can be excluded if one can construct suitable
foliations of the space by convex hypersurfaces. Such foliations are obvious
in the hyperbolic space but do exist also in more general Hadamard manifolds,
as explained in the next section. Instead we shall now set up a blow up
procedure, magnifying neighborhoods of the point where the concentration
happens. The splitting off of punctured minimal spheres may repeat itself,
however we can show that after a finitely many split offs a solution to the
asymptotic Plateau problem remains.

\begin{proof}
[Proof of Theorem \ref{main1}]In the proof of Lemma \ref{areas} it was already
used that there is a positive lower bound for the euclidean area of discs
spanned by one of the curves $\Gamma_{R},$ $R\geq1.$ We need a corresponding
statement for a family of curves which are obtained from the $\Gamma_{R}$ by
the following modifications: One takes out a subarc $\alpha$ from $\Gamma_{R}$
of Euclidean length not exceeding $\varepsilon>0$ and replaces it by some
other rectifiable arc $\beta$ of length at most $\delta.$ If $\widetilde{M}$
is a disc spanned by such a modified curve $\widetilde{\Gamma}_{R}$ one may
produce a disc $M$ filling the original $\Gamma_{R}$ by attaching a cone over
$\alpha\cup\beta$ along the boundary segment $\beta$ of $\widetilde{M}$ and
hence%
\[
\operatorname*{area}\nolimits^{e}\left(  M\right)  \leq\operatorname*{area}%
\nolimits^{e}\left(  \widetilde{M}\right)  +\left(  \varepsilon+\delta\right)
^{2}.
\]
If therefore $\varepsilon$ and $\delta$ are sufficiently small we see that
there is $a_{0}>0$ such that
\begin{equation}
\operatorname*{area}\nolimits^{e}\left(  \widetilde{M}\right)  \geq a_{0},
\label{are}%
\end{equation}
where $\operatorname*{area}\nolimits^{e}$ denotes the euclidean area, for all
discs spanned by some $\widetilde{\Gamma}_{R},$ $R\geq1.$ Let us now return to
the representation (\ref{ei}) for the boundary data of the family
$u_{R\text{:}}$%
\[
u_{R}\left(  e^{i\theta}\right)  =\gamma_{R}\left(  \varphi_{R}(\theta
)\right)  ,\text{ }0\leq\theta\leq2\pi,
\]
$\gamma_{R}$ being a proportional-to-arclength parametrization.

If for a sequence $R_{k}\rightarrow+\infty$ the sequence $\left(
\varphi_{R_{k}}\right)  $ converges pointwise to a step function with one jump
of height $2\pi$ we may (after a rotation of $D)$ assume that the jump occurs
at $\theta=\pi.$ After passing to a subsequence we may assume that
\[
\varphi_{R_{k}}\left(  \pi+\frac{1}{k}\right)  -\varphi_{R_{k}}\left(
\pi-\frac{1}{k}\right)  >2\pi-\frac{1}{k}%
\]
so that $\gamma_{R_{k}}\circ\varphi_{R_{k}}\left(  \left[  \pi-1/k,\pi
+1/k\right]  \right)  $ represents a subarc of euclidean length at least
$\left(  1-1\backslash(2k\pi)\right)  \operatorname*{length}\left(
\Gamma_{R_{k}}\right)  $ and the complementary subarc of $\Gamma_{R_{k}}$ has
length at most $\left(  1/(2k\pi)\right)  \operatorname*{length}\left(
\Gamma_{R_{k}}\right)  .$ The lemma of Courant-Lebesgue [\cite{DHS}, 4.4]
provides a radius $r_{k}\in\left(  1/k,1/\sqrt{k}\right)  $ such that
\[
\operatorname*{length}\left(  u_{R_{k}}\left(  D\cap\partial D_{r_{k}%
}(-1)\right)  \right)  \leq\sqrt{\frac{8\pi E_{0}}{\ln k}}%
\]
where $E_{0}$ is an upper bound for the euclidean energies of $u_{R_{k}}.$ For
sufficiently large $k$ the curve $\widetilde{\Gamma}_{R_{k}}:=u_{R_{k}}\left(
\partial\left(  D\cap D_{r_{k}}(-1)\right)  \right)  $ satisfies the
conditions required for inequality (\ref{are}), making it obvious that a
concentration of energy takes place near the boundary point $-1.$ Since
$u_{R_{k}}|D\cap D_{r_{k}}(-1)$ is part of the surface $u_{R_{k}},$ (\ref{a})
and the estimates of Lemma \ref{areas} trivially remain valid for $u_{R_{k}%
}|D\cap D_{r_{k}}(-1),$ irrespective of the modification of the boundary
curve. But then (\ref{ten}) also holds with $R=R_{k}$ showing that there is a
radius $\widetilde{\rho}>0$ only depending on $\Gamma,$ $a_{0}$ and the
geometry of $N$ such that%
\[
u_{R_{k}}\left(  D\cap D_{r_{k}}(-1)\right)  \cap B_{\widetilde{\rho}}\left(
0\right)  \neq\emptyset
\]
for all sufficiently large $k,$ unless (\ref{are}) were violated. Therefore we
may now choose conformal maps $T_{k}:D\rightarrow D\cap D_{r_{r}}(-1)$ such
that $\widetilde{u}_{R_{k}}:=u_{R_{k}}\circ T_{k}$ satisfies%
\begin{equation}
\widetilde{u}_{R_{k}}\left(  0\right)  \in B_{\widetilde{\rho}}\left(
0\right)  \subset N. \label{point}%
\end{equation}

Let us now look at the minimal surface $u_{R_{k}}|D\backslash D_{r_{k}}(-1)$
which tends to a punctured sphere for $k\rightarrow\infty.$ Recalling the
condition $u_{R_{k}}\left(  0\right)  \in B_{\rho}\left(  0\right)  \subset N$
and observing that
\[
u_{R_{k}}\left(  \partial\left(  D\backslash D_{r_{k}}(-1)\right)  \right)
\cap B_{2\rho}\left(  0\right)  =\emptyset
\]
for sufficiently large $k$ we obtain from the monotonicity formula \ref{mon}
\[
\operatorname*{area}\nolimits^{N}\left(  u_{R_{k}}\left(  \left(  D\backslash
D_{r_{k}}(-1)\right)  \right)  \cap B_{2\rho}\left(  0\right)  \right)
\geq\pi\rho^{2},
\]
which in view of Proposition \ref{BM} leads to an estimate of the euclidean
energy of $u_{R_{k}}|D\cap D_{r_{k}}(-1)$ of the form%
\begin{equation}
E\left(  u_{R_{k}}\left(  \left(  D\backslash D_{r_{k}}(-1)\right)  \right)
\right)  \geq\operatorname*{e}\left(  \rho\right)  , \label{dset}%
\end{equation}
where $\operatorname*{e}\left(  \rho\right)  $ depends only on $\rho$ and the
geometry of $N.$ Recalling that $E_{0}$ was an upper bound for the euclidean
energies of the sequence $u_{R_{k}}$ we thus see that
\begin{equation}
E\left(  \widetilde{u}_{R_{k}}\right)  \leq E_{0}-\operatorname*{e}\left(
\rho\right)  , \label{des8}%
\end{equation}
i.e. the splitting off of a punctured minimal sphere reduces the energy by a
fixed amount. We may now apply the same analysis as in the proof of Lemma
\ref{main} to the sequence $\left(  \widetilde{u}_{R_{k}}\right)  $ resulting
in the convergence locally in $C^{2}$ and weakly in $H_{2}^{1}\left(
D,\mathbb{R}^{n}\right)  $ of a subsequence of $\left(  \widetilde{u}_{R_{k}%
}\right)  $ towards a conformal, harmonic, proper map from $D$ to $N.$ Let us
now investigate the behavior of the boundary values of $\widetilde{u}_{R_{k}%
}|D$ which parametrize the curve $\widetilde{\Gamma}_{R_{k}}$ monotonically.

We recall that $\widetilde{\Gamma}_{R_{k}}$ consists of a subarc $\alpha_{k}$
of $\Gamma_{R_{k}}$ of length at least $\left(  1-1/(2k\pi)\right)
\operatorname*{length}\left(  \Gamma_{R_{k}}\right)  $ and with endpoints
$u_{R_{k}}\left(  \partial D\cap\partial D_{r_{k}}(-1)\right)  $ together with
the arc $\beta_{k}=u_{R_{k}}\left(  D\cap\partial D_{r_{k}}(-1)\right)  $ of
length at most $\sqrt{8\pi E_{0}/\ln k}.$ We choose proportional-to-arclength
parametrizations $\widetilde{\gamma}_{k}:\left[  0,2\pi\right]  \rightarrow
\widetilde{\Gamma}_{R_{k}}$ such that $\widetilde{\gamma}_{k}(0)$ is an
endpoint of $\alpha_{k}.$ Passing to a subsequence we have $\widetilde{\gamma
}_{k}\rightarrow\gamma$ uniformly, where $\gamma$ is a
proportional-to-arclength parametrization of $\Gamma.$ The representation
$\widetilde{u}_{R_{k}}\left(  e^{i\theta}\right)  =\widetilde{\gamma}%
_{k}\left(  \varphi_{k}(\theta)\right)  $ holds with monotone functions
$\varphi_{k},$ $\varphi_{k}(2\pi)-\varphi_{k}(0)=2\pi.$ After a rotation of
$D$ we may assume that $\varphi_{k}(0)=0$ and hence $\varphi_{k}(2\pi)=2\pi.$
Let us choose $\theta_{k}\in\left(  0,2\pi\right)  $ such that $\widetilde
{\gamma}_{k}\circ\varphi_{k}|\left[  0,\theta_{k}\right]  $ parametrizes
$\alpha_{k}$ and $\widetilde{\gamma}_{k}\circ\varphi_{k}|\left[  \theta
_{k},2\pi\right]  $ parametrizes $\beta_{k}.$ Then clearly%
\begin{equation}
\varphi_{k}(\theta_{k})\rightarrow2\pi\text{ }\left(  k\rightarrow
\infty\right)  . \label{dis9}%
\end{equation}
After passing to a subsequence we may assume that $\theta_{k}\rightarrow
\widetilde{\theta}\in\left[  0,2\pi\right]  $ $\left(  k\rightarrow
\infty\right)  ,$ $\varphi_{k}\rightarrow\varphi$ pointwise on $\left[
0,2\pi\right]  $ and $\widetilde{u}_{R_{k}}\rightarrow\widetilde{u}$ locally
in $C^{2}$ and weakly in $H_{2}^{1}\left(  D,\mathbb{R}^{n}\right)  $ where
$\widetilde{u}:D\rightarrow N$ is a harmonic, conformal, proper map.

If $\widetilde{\theta}=0$ then $\varphi(\theta)=2\pi$ on $\left(
0,2\pi\right]  $ and hence $\widetilde{u}|_{\partial D}=\gamma(2\pi),$ i.e. a
punctured minimal sphere has split off. Let us consider the case that
$\widetilde{\theta}>0.$

It follows from (\ref{dis9}) that $\varphi(\theta)=2\pi$ for all $\theta
\in\left(  \widetilde{\theta},2\pi\right]  $ and $\varphi\left(
\widetilde{\theta}-0\right)  =2\pi$ so that by monotonicity $\varphi
(\widetilde{\theta})=2\pi.$ Since $\widetilde{u}|\partial D=\gamma\circ
\varphi$ exactly as in the proof of Lemma \ref{main} the alternative arises
that either $\varphi$ is continuous and $\widetilde{u}\left(  \partial
D\right)  =\Gamma$ or $\varphi$ is a step function with a jump of height
$2\pi.$ In the first case $\widetilde{u}$ is a solution to the asymptotic
Plateau problem in the $H_{2}^{1}-$sense and in the second one a punctured
minimal sphere has split off again. If the latter happens we can repeat the
whole blow-up process, in each step lowering the energy by a fixed amount, see
(\ref{des8}). This must stop as soon as the minimal area threshold (\ref{are})
were violated. This proves the theorem.
\end{proof}

\section{\label{SC}Proof of Theorem \ref{main2}}

\qquad We recall that a Hadamard manifold $N$ satisfies the strict convexity
condition if, given $x\in\partial_{\infty}N$ and a relatively open subset
$W\subset\partial_{\infty}N$ containing $x,$ there exists a $C^{2}$-open
subset $\Omega\subset\overline{N}$ such that $x\in\operatorname*{Int}\left(
\partial_{\infty}\Omega\right)  \subset W,$ where $\operatorname*{Int}\left(
\partial_{\infty}\Omega\right)  $ denotes the interior of $\partial_{\infty
}\Omega$ in $\partial_{\infty}N,$ and $N\setminus\Omega$ is convex. Loosely
speaking, this means that, as it happens with strictly convex bounded domains
in Euclidean spaces, we can take out a neighborhood in $\overline{N}$ at any
point at infinity of $N,$ which arbitrarily small asymptotic boundary, and
what remains is still convex.

Let $B$ be the model of $N$ as in Theorem \ref{main}. We only have to prove
that $\partial_{\infty}u(D)=\Gamma.$ Given $x\in\partial_{\infty}%
B\backslash\Gamma$ we prove that $x\notin\partial_{\infty}u(D)$ from what it
follows that $\partial_{\infty}u(D)\subset\Gamma.$ Since, by Theorem
\ref{main}, $u\in H_{2}^{1}\left(  D,\mathbb{R}^{n}\right)  $ it follows that
$\partial_{\infty}u(D)=\Gamma.$

Since $\Gamma$ is compact, there is $W\subset\partial_{\infty}B$ such that
$W\cap\Gamma=\varnothing.$ By the strict convex condition there is a $C^{2}$
convex neighborhood $\Omega$ of $N$ such that $x\in\operatorname*{Int}\left(
\partial_{\infty}\Omega\right)  \subset W.$ Let $d:\Omega\rightarrow\left[
0,+\infty\right)  $ be the distance to $\partial\Omega.$ Then the level
hypersurfaces of $d$ determine a foliation of $\Omega$ by equidistant
hypersurfaces to $\partial\Omega.$ From the Hessian Comparison Theorem if $S$
is a leaf of this foliation at a distance $d$ of $\partial\Omega$ then any
principal curvature $\lambda$ of $S$ with respect to the unit normal vector
field pointing to the connected component of $B\backslash S$ that does not
contain $x,$ satisfies $\lambda\geq a\tanh(ad).$ That is, the level
hypersurfaces of $d$ provides a foliation $\left\{  S_{d}\right\}  $ of
$\Omega$ which is convex towards the connected component of $B\backslash
S_{d}$ which does not contain $x.$ Since $\lim_{R\rightarrow\infty}\partial
u_{R}(D)=\Gamma$ for a sufficiently large $R_{0}$ we have $\partial
u_{R}(D)\cap\Omega=\emptyset$ for all $R\geq R_{0}.$ By the comparison theorem
it follows that $u_{R}(D)\cap\Omega=\emptyset$ for all $R\geq R_{0}$ and then
$x\notin\partial_{\infty}u(D),$ proving the theorem.

\bigskip


\begin{thebibliography}{99}                                                                                               %
\bibitem {An}M. T. Anderson: \textquotedblleft\emph{Complete minimal varieties
in hyperbolic space\textquotedblright}, Invent. Math. 69, 477--494, 1982.

\bibitem {An2}M. T. Anderson: \textquotedblleft\emph{Complete minimal
hypersurfaces in hyperbolic }$\emph{n-}$\emph{manifolds\textquotedblright},
Commnet. Math. Helv. 58, 264--290, 1983

\bibitem {BL}V. Bangert, U. Lang: \textquotedblleft\emph{Trapping
quasiminimizing submanifolds in spaces of negative curvature\textquotedblright%
}, Comment. Math. Helv. 71, 1, 122-143, 1996

\bibitem {CHR}Jb. Casteras, I. Holopainen, J. Ripoll: \textquotedblleft%
\emph{Convexity at infinity in Cartan-Hadamard manifolds and applications to
the asymptotic Dirichlet and Plateau problems\textquotedblright, }http://arxiv.org/pdf/1507.07311v2.pdf

\bibitem {C1}B. Coskunuzer: \textquotedblleft\emph{The asymptotic Plateau
Problem\textquotedblright, }http://arxiv.org/abs/0907.0552

\bibitem {C2}B. Coskunuzer: \textquotedblleft\emph{Asymptotic }$H-$%
\emph{Plateau problem in hyperbolic space\textquotedblright}, http://arxiv.org/pdf/1505.00650.pdf

\bibitem {Ch}H. I. Choi: \emph{\textquotedblleft Asymptotic Dirichlet problems
for harmonic functions on Riemannian manifolds\textquotedblright}. Trans. Am.
Math. Soc., \textbf{281 (2)}: 691--716, 1984.

\bibitem {DLR}M. Dajczer, J. H. de Lira, J. Ripoll: \textquotedblleft\emph{An
interior gradient estimate for the mean curvature equation of Killing graphs
and applications\textquotedblright, }Journal d'Analyse Math\'{e}matique, to appear

\bibitem {DHS}U. Dierkes, S. Hildebrandt, F. Sauvigny: \emph{\textquotedblleft
Minimal surfaces\textquotedblright, }Grundlehren Math. Wiss. 339, Springer,
Berlin, 2010

\bibitem {DHT}U. Dierkes, S. Hildebrandt, A. J. Tromba:
\emph{\textquotedblleft Regularity of minimal surfaces\textquotedblright,
}Grundlehren Math. Wiss. 340, Springer, Heidelberg, 2010

\bibitem {EO}P. Eberlein and B. O'Neill: \textquotedblleft\emph{Visibility
manifolds\textquotedblright}, Pacific J . Math. 46, 45-109, 1973.

\bibitem {Fe}H. Federer: \emph{\textquotedblleft Geometric Measure
Theory\textquotedblright, }Grundlehren Math. Wiss. 153, Springer, Berlin, 1969

\bibitem {G}R. Gulliver: \emph{\textquotedblleft Regularity of minimal
surfaces of prescribed mean curvature\textquotedblright, }Ann. Math. 97, 275 -
305, 1973

\bibitem {GS}B. Guan, J. Spruck: \textquotedblleft\emph{Hypersurfaces of
constant mean curvature in hyperbolic space with prescribed asymptotic
boundary at infinity}\textquotedblright, Amer. J. Math. 122, 1039 --1060, 2002.

\bibitem {JK}W. J\"{a}ger, H. Kaul: \emph{\textquotedblleft Uniqueness and
stability of harmonic maps and their Jacobi fields\textquotedblright,
}Manuscripta Mathematicae 28, 269 - 291, 1979

\bibitem {L}F. H. Lin: \textquotedblleft\emph{On the Dirichlet problem for
minimal graphs in hyperbolic space\textquotedblright}, Invent. Math. 96,
593--612, 1989

\bibitem {MY}W. H. Meeks, S.-T Yau:\ \emph{\textquotedblleft The classical
Plateau problem and the topology of three-dimensional
manifolds\textquotedblright, }Topology 21, 409 - 440, 1982

\bibitem {Mo1}C. B. Morrey: \emph{\textquotedblleft The problem of Plateau on
a Riemannian manifold\textquotedblright, }Ann. of Math. (2) 49, 807 - 851,
1948\emph{ }

\bibitem {Mo2}C. B. Morrey: \emph{\textquotedblleft Multiple integration in
the Calculus of Variations\textquotedblright, }Grundlehren Math. Wiss. 130,
Springer, Berlin, 1966

\bibitem {NS}B. Nelli, J. Spruck: \textquotedblleft\emph{On existence and
uniqueness of constant mean curvature hypersurfaces in hyperbolic
space\textquotedblright}, Geometric Analysis and the Calculus of Variations,
Internat. Press, Cambridge, MA, 253--266, 1996

\bibitem {RT}J. Ripoll, M. Telichevesky: \textquotedblleft\emph{Regularity at
infinity of Hadamard manifolds with respect to some elliptic operators and
applications to asymptotic Dirichlet problems\textquotedblright}\textit{,}
Trans. Amer. Math. Soc. 367, 1523-1541, 2015

\bibitem {RT2}J. Ripoll, M. Telichevesky: \textquotedblleft\emph{On the
asymptotic Plateau problem for CMC hypersurfaces in hyperbolic
space\textquotedblright}, http://arxiv.org/pdf/1503.08083v2.pdf

\bibitem {SY}R. Schoen, S. T. Yau: \textquotedblleft\emph{Lectures on
Differential Geometry}\textquotedblright\textit{, }Conference Proceedings and
Lecture Notes in Geometry and Topology, Vol I, International Press, 1994.
\end{thebibliography}
\end{document}